\renewcommand{\epsilon}{\varepsilon}
\newcommand{\CC}{\mathbb C}
\newcommand{\PP}{\mbb P}
\newcommand{\ip}[1][\cdot,\cdot]{\left\langle #1 \right\rangle}
\newcommand{\DVS}{\mathrm{DVS}}
\newcommand{\CVS}{\mathrm{CVS}}
\newcommand{\PO}{\PP_0}
\newcommand{\POne}{\PP_1}
\newcommand{\hoPODisj}{(\widehat{\PO\text{-}\Disj_N})_\infty}
\DeclareMathOperator{\Sym}{Sym} \DeclareMathOperator{\Hom}{Hom}
\def\cC{\mathcal C}
\def\cF{\mathcal F}
\def\cT{\mathcal T}
\def\cZ{\mathcal Z}
\def\CC{\mathbb C}
\def\HH{\mathbb H}
\def\PP{\mathbb P}
\def\RR{\mathbb R}\def\bbS{\mathbb S}
\def\ZZ{\mathbb Z}
\def\sC{\mathscr C}
\def\sE{\mathscr E}
\def\sL{\mathscr L}
\def\sO{\mathscr O}\def\sP{\mathscr P}
\def\sV{\mathscr V}
\def\fJ(E){\mathfrak E}
\def\fJ{\mathfrak J}
\def\fX{\mathfrak X}
\def\fg{\mathfrak g}
\declaretheoremstyle[
spaceabove=7pt, spacebelow=7pt,
headfont=\normalfont\bfseries,
notefont=\mdseries, notebraces={(}{)},
bodyfont=\normalfont,
postheadspace=5pt,
headpunct = .
]{thm}
\declaretheoremstyle[
spaceabove=7pt, spacebelow=7pt,
headfont=\normalfont\bfseries,
notefont=\mdseries, notebraces={(}{)},
bodyfont=\normalfont,
postheadspace=10pt,
headpunct = .
]{def}
\declaretheoremstyle[
spaceabove=4pt, spacebelow=7pt,
headfont=\itshape,
postheadspace=5pt,
headpunct = :,
postheadspace = 3pt, qed = $\lozenge$
]{rem}
\declaretheorem[numbered = yes, parent = section, style = thm]{theorem}
\declaretheorem[numbered=yes, style = thm]{conjecture}
\declaretheorem[sibling = theorem, style = thm, name = Theorem/Definition]{thm-def}
\declaretheorem[sibling = theorem, style = thm]{lemma}
\declaretheorem[sibling = theorem, style = thm]{notation}
\declaretheorem[sibling = theorem, style = thm, name = Definition-Lemma]{deflem}
\numberwithin{equation}{section}
\declaretheorem[sibling = theorem, style = def]{definition}
\declaretheorem[sibling = theorem, style = rem]{remark}
\declaretheorem[sibling = theorem, style = rem]{example}
\newcommand{\cinfty}{C^{\infty}}
\newcommand\Obcl[1][~]{\ifthenelse{ \equal{#1}{~}} {
  \operatorname{Obs}^{cl}
}{
  \operatorname{Obs}^{cl}(#1)
}}
\newcommand\Obq[1][~]{\ifthenelse{ \equal{#1}{~}} {
  \operatorname{Obs}^{q}
}{
  \operatorname{Obs}^{q}(#1)
}}
\newcommand{\coshriek}{\scriptstyle \text{\rm !`}}
\DeclareMathOperator{\Disj}{Disj}
\DeclareMathOperator{\id}{id}
\newcommand{\sEb}{\sE_\partial}
\newcommand{\Dens}{\mathrm{Dens}}
\newcommand{\Denssheaf}{\mathscr{D}ens}
\def\del{\partial}
\def\RRge{\RR_{\geq 0}}
\def\RRgt{\RR_{> 0}}
\def\hotimes{{\widehat{\otimes}}}
\newcommand{\TA}{\mathrm{T}A}
\newcommand{\TB}{\mathrm{T}B}
\newcommand{\di}{\mathrm d \iota^\tau}
\newcommand{\dpi}{\mathrm d \pi^\tau}
\newcommand{\deta}{\mathrm d \eta^\tau}
\tikzset{
    vector/.style={decorate, decoration={snake}, draw},
	provector/.style={decorate, decoration={snake,amplitude=2.5pt}, draw},
	antivector/.style={decorate, decoration={snake,amplitude=-2.5pt}, draw},
    fermion/.style={draw=black, postaction={decorate},
        decoration={markings,mark=at position .55 with {\arrow{>}}}},
    Ahalfedge/.style={draw=black, postaction={decorate},
        decoration={markings,mark=at position .25 with {\arrow{>}}, mark = at position 1 with {\arrow[draw=black,xshift=1.5pt]{Circle[length=3pt]}}}},
    Bhalfedge/.style={draw=black, postaction={decorate},
        decoration={markings,mark=at position .80 with {\arrow[draw=black]{>}}, mark = at position 0 with {\arrow[draw=black,xshift=1.5pt]{Circle[length=3pt]}}}},
 ABedge/.style={draw=black, postaction={decorate},
        decoration={markings,mark=at position .50 with {\arrow[draw=black]{chevron}}}},
ABfulledge/.style={draw=black, postaction={decorate},
        decoration={markings,mark = at position .50 with {\arrow[draw=black]{chevron}}, mark = at position .3 with {\arrow[draw=black]{>}}, mark = at position .75 with {\arrow[draw=black]{>}}, mark = at position .0 with {\arrow[draw=black,xshift=1.5pt]{Circle[length=3pt]}}, mark = at position 1 with {\arrow[draw=black,xshift=1.5pt]{Circle[length=3pt]}}}},
BBfulledge/.style={draw=black, postaction={decorate},
        decoration={markings,mark = at position .50 with {\arrow[draw=black]{newdiamond}}, mark = at position .3 with {\arrow[draw=black]{>}}, mark = at position .75 with {\arrow[draw=black]{<}}, mark = at position .0 with {\arrow[draw=black,xshift=1.5pt]{Circle[length=3pt]}}, mark = at position 1 with {\arrow[draw=black,xshift=1.5pt]{Circle[length=3pt]}}}},
 BBedge/.style={draw=black, postaction={decorate},
        decoration={markings,mark=at position .50 with {\arrow[draw=black]{newdiamond}}}},
    fermionbar/.style={draw=black, postaction={decorate},
        decoration={markings,mark=at position .55 with {\arrow{<}}}},
    fermionnoarrow/.style={draw=black},
    gluon/.style={decorate, draw=black,
        decoration={coil,amplitude=4pt, segment length=5pt}},
    scalar/.style={dashed,draw=black, postaction={decorate},
        decoration={markings,mark=at position .55 with {\arrow[draw=black]{>}}}},
    scalarbar/.style={dashed,draw=black, postaction={decorate},
        decoration={markings,mark=at position .55 with {\arrow[draw=black]{<}}}},
    scalarnoarrow/.style={dashed,draw=black},
    electron/.style={draw=black, postaction={decorate}/,
        decoration={markings,mark=at position .55 with {\arrow[draw=black]{>}}}},
	bigvector/.style={decorate, decoration={snake,amplitude=4pt}, draw},
	    ray/.style={draw=black, postaction={decorate},
        decoration={markings,mark=at position 1 with {\arrow[draw=black]{>}}, mark = at position 0 with {}}}
}
\begin{document}


\title  {A Classical Bulk-Boundary Correspondence}
\author {Eugene Rabinovich}
\address{Department of Mathematics\\
University of Notre Dame\\
255 Hurley\\
Notre Dame, IN 46556}
\email{erabinov@nd.edu}
\maketitle
\epigraph{That which is below is from that which is above.}{\emph{Emerald Table,} Holmyard translation}

\begin{abstract}
In this article, we use the language of $\mathbb{P}_0$-factorization algebras to articulate a classical bulk-boundary correspondence between 1) the observables of a Poisson Batalin-Vilkovisky (BV) theory on a manifold $N$ and 2) the observables of the associated universal bulk-boundary system on $N\times \RR_{\geq 0}$.
The archetypal such example is the Poisson BV theory on $\RR$ encoding the algebra of functions on a formal Poisson manifold, whose associated bulk-boundary system on the upper half-plane is the Poisson sigma model.
In this way, we obtain a generalization and justification of the basic insight that led Kontsevich to his deformation quantization of Poisson manifolds. The proof of these results relies significantly on the operadic homotopy theory of $\mathbb{P}_0$-algebras.
\end{abstract}







\pagestyle{headings}  
\tableofcontents
\section{Introduction}
In this paper, we articulate and prove---in the language of $\PO$-factorization algebras---a correspondence between ``degenerate'' (in the sense that Poisson structures are ``degenerate'' symplectic structures) classical field theories on a manifold $N$ and their universal bulk-boundary systems on $N\times \RRge$. Our aim is to provide a justification and generalization of the basic insight that led Kontsevich to his deformation quantization of Poisson manifolds \autocite{KontPSM}.

\subsection{Background}
Cattaneo and Felder \autocite{CFPSM} have given an interpretation of Kontsevich's deformation quantization procedure in terms of correlation functions in a two-dimensional field theory defined on the unit disk.
The field theory is known as the \emph{Poisson sigma model} \autocite{schallerstrobl, ikeda} and the observables whose correlation functions are computed ``have support on'' the boundary circle.
One of the most surprising things about this fact is the need to pass to a two-dimensional theory to obtain these correlation functions, even if the correlation functions themselves ``live on'' a one-dimensional space.
By contrast, for symplectic manifolds, it is known \autocite{gradylili} that Fedosov's analogous procedure \autocite{fedosov} for deformation quantization corresponds to the quantization of a one-dimensional Batalin-Vilkovisky (BV) \autocite{originalBV} theory.
One wonders, therefore, whether there is a one-dimensional theory one may associate to a Poisson manifold.
The answer is yes, but it is a Poisson BV theory, as opposed to an ``ordinary'' symplectic BV theory.
(Perturbative) Poisson BV theories were defined by Butson and Yoo \autocite{butsonyoo}, who called them ``degenerate field theories.''
``Ordinary'' BV theories are described in the language of $(-1)$-shifted symplectic geometry; Poisson BV theories are described, as the name suggests, in the language of the analogous \emph{Poisson} geometry.
Henceforth, we will refer to ``ordinary'' BV theories as ``symplectic BV theories'' or simply ``BV theories'' when the risk of confusion is not high; we will always refer to Poisson BV theories by the full term.
Furthermore, whenever the term ``theory'' appears in this paper, it is to be understood to mean ``perturbative theory''.

For any PBV theory $\cT$ on a manifold $N$ (without boundary), Butson and Yoo describe a BV theory $\cZ(\cT)$ on $N\times \RR_{\geq 0}$.
This BV theory is called the ``universal bulk theory'' for $\cT$.
Butson and Yoo call the one-dimensional PBV theory implicit in Kontsevich's work ``topological Poisson mechanics.''
Its universal bulk theory is precisely the Poisson sigma model.
The universal bulk theory comes with a canonical boundary condition which corresponds to $\cT$.
Together, $\cZ(\cT)$ and $\cT$ determine what one may call a bulk-boundary system \autocite{classicalarxiv}.
In his PhD thesis \autocite{ERthesis}, the author has developed some techniques for the study of classical and quantum aspects of bulk-boundary systems.
The main theorem of this paper relates the Poisson BV theory theory $\cT$ on $N$ (as described by Butson and Yoo) to the bulk-boundary system $(\cZ(\cT),\cT)$ on $N\times \RRge$ (as described by the author).
Our work builds most directly on that of Butson and Yoo, although we are aware of similar constructions in \autocite{TJF1} and \autocite{TJF2}.

To articulate this main theorem more concretely, we use the language of (pre)factorization algebras, as in the work of Costello and Gwilliam \autocite{CG1}. 
A prefactorization algebra on a space $M$ is in particular a precosheaf on $M$; however, it possesses further structure maps, namely maps which allow one to ``multiply'' elements assigned to disjoint open subsets of $M$.
One may also consider a descent condition for prefactorization algebras, analogous to codescent for cosheaves; objects satisfying this additional condition are called factorization algebras.
Factorization algebras model the structure present in the classical and quantum observables of BV theories.
Indeed, Costello and Gwilliam \autocite{cost, CG1, CG2} have shown how to obtain a factorization algebra of observables from a classical or quantum BV theory on a manifold $M$ without boundary.
Moreover, in the classical case, the relevant factorization algebra possesses a cohomological degree $+1$ Poisson bracket, so the factorization algebra of classical observables is, in fact, a $\PO$-factorization algebra.
(There are various ways to say what this means; in brief, such a factorization algebra $\cF$ assigns to each open set a $\PO$-algebra, and the structure maps are compatible with this structure.)
This classical case has been extended in two different directions. First, given a Poisson BV theory $\cT$ on a manifold $N$, there is a $\PO$-factorization algebra $\Obcl_{\cT}$ on $N$ of classical observables for the Poisson BV theory \autocite{butsonyoo}.
Second, given a bulk-boundary system $(\sE,\sL)$ (here $\sE$ denotes the ``bulk theory'' and $\sL$ the ``boundary condition'') on a manifold $M$ with boundary, the author has shown \autocite{classicalarxiv} that there is a $\PO$-factorization algebra $\Obcl_{\sE,\sL}$ on $M$ of classical observables of the bulk-boundary system.
For a Poisson BV theory on a manifold $N$, we therefore obtain two $\PO$-factorization algebras: $\Obcl_{\cT}$, which lives on $N$, and $\Obcl_{\cZ(\cT),\cT}$, which lives on $N\times \RR_{\geq 0}$.
In this paper, we explore the relationship between these two factorization algebras.

\subsection{Statement and Interpretation of Results}

To state the main theorem of this paper, we need to introduce a bit of notation.
$\PO$-prefactorization algebras are algebras over a particular colored operad, which we denote $\PO\text{-}\Disj_N$.
We let $\hoPODisj$ denote the bar-cobar resolution of this colored operad.
(The subscript $_\infty$ is meant to indicate that $\hoPODisj$ is a cofibrant resolution of $\PO\text{-}\Disj_N$; the hat indicates that the resolution is not obtained using Koszul duality theory.)
We describe the operads $\PO\text{-}\Disj$ and $\hoPODisj$ in more detail in Section \ref{subsec: operadconventions}; here, we note only that there is a quasi-isomorphism of colored operads $\hoPODisj\to \PO\text{-}\Disj_N$, so that any $\PO$-prefactorization algebra is in particular an algebra over $\hoPODisj$ and the categories of algebras over the two operads are ``the same'' in a suitable sense.

The main theorem of this paper is:

\begin{theorem}
\label{thm: main}
Let $\cT$ be a Poisson BV theory on $N$, and $\cZ(\cT)$ its universal bulk theory. Let $\rho:N\times \RR_{\geq 0} \to N$ denote the projection onto the first component. Then, there is an $\infty$-quasi-isomorphism of $\hoPODisj$-algebras
\[
\Obcl_{\cT} \overset{\sim}{\to} \rho_* \left(\Obcl_{\cZ(\cT),\cT}\right)
\]
on~$N$,
where $\rho_*$ denotes the pushforward of factorization algebras from $N\times \RRge$ to $N$. 
\end{theorem}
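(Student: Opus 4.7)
My plan is to establish the theorem by a local-to-global argument combined with operadic homotopy transfer. First I construct a candidate morphism over arbitrary opens $U \subseteq N$ via boundary restriction of fields; then I verify this is a quasi-isomorphism of underlying cochain complexes via a Poincar\'e-lemma argument in the $\RRge$-direction; finally I upgrade this to an $\infty$-morphism of $\hoPODisj$-algebras using the operadic homotopy theory of $\PO$-algebras alluded to in the abstract.

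For the construction, recall that by Butson--Yoo's construction of $\cZ(\cT)$, the boundary condition $\cT$ cuts out at $N \times \{0\}$ precisely the Lagrangian corresponding to the fields of $\cT$. Restriction of fields to the boundary thus defines a chain map $r_U \colon \sE_{\cZ(\cT),\cT}(U \times \RRge) \to \sE_\cT(U)$ for every open $U \subseteq N$, and the corresponding pullback on polynomial functionals yields a map of commutative dg algebras
\[
r_U^* \colon \Obcl_\cT(U) \to \Obcl_{\cZ(\cT),\cT}(U \times \RRge) = \rho_*\bigl(\Obcl_{\cZ(\cT),\cT}\bigr)(U).
\]
Because $r$ is natural in $U$ and because disjoint opens in $N$ pull back to disjoint opens in $N \times \RRge$ under $\rho^{-1}$, the family $\{r_U^*\}$ intertwines the prefactorization structures; the shifted Poisson brackets agree on linear generators since both are induced from the same underlying boundary symplectic pairing that defines $\cT$.

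To show that $r_U^*$ is a quasi-isomorphism of underlying cochain complexes, I would decompose the bulk fields, at least up to quasi-isomorphism, as $\sE_{\cZ(\cT)}(U \times \RRge) \simeq \sE_\cT(U) \hotimes \Omega^\bullet(\RRge)$ in an appropriately nuclear sense. The boundary condition $\cT$ corresponds to cutting the second factor down to a subcomplex whose cohomology is $\RR$ concentrated in degree zero with projection given by evaluation at $t = 0$. Restriction-to-the-boundary is thus a quasi-isomorphism of fields, and the induced map $r_U^*$ on observables is a quasi-isomorphism because the symmetric algebra construction preserves quasi-isomorphisms of sufficiently nuclear cochain complexes. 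This settles the theorem at the level of underlying cochain complexes of precosheaves on $N$.

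The principal obstacle is to promote the collection $\{r_U^*\}$ to an $\infty$-quasi-isomorphism of $\hoPODisj$-algebras rather than merely a compatible family of cochain quasi-isomorphisms: one must produce all higher homotopies witnessing compatibility both with the degree-$+1$ Poisson brackets and with the factorization structure associated to arbitrarily many disjoint inclusions in $N$. Here I would apply the homotopy transfer theorem for algebras over the cofibrant colored operad $\hoPODisj$, using the explicit contracting homotopy provided by integration along $\RRge$ paired with evaluation at $t = 0$. Because this homotopy depends only on the radial coordinate and not on the $N$-direction, it is simultaneously compatible with every choice of disjoint subopens inside an open $U \subseteq N$; the transferred $\hoPODisj$-structure therefore assembles, over varying $U$, into a coherent $\infty$-quasi-isomorphism between $\Obcl_\cT$ and $\rho_*(\Obcl_{\cZ(\cT),\cT})$. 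A final comparison, using the functoriality of homotopy transfer, identifies the transferred $\hoPODisj$-structure on $\Obcl_\cT$ with its canonical $\PO$-structure up to an $\infty$-quasi-isomorphism and completes the argument.
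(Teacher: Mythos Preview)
Your overall architecture---construct a map on fields, dualize to observables, verify it is a quasi-isomorphism open-by-open, then invoke homotopy transfer for the colored operad $\hoPODisj$---matches the paper's strategy. But two of the concrete steps contain genuine gaps.

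First, the map $r_U^*$ built from evaluation at $t=0$ does not land in $\Obcl_{\cZ(\cT),\cT}(U\times\RRge)$. Recall that both observable complexes consist of functionals with \emph{smooth first derivative}: for $\varphi\in B_{(1)}(U)\cong \sL^!_c(U)$, the pullback $r_U^*\varphi$ is the functional $s\mapsto \langle\varphi,\mathrm{ev}_0^* s\rangle$, whose derivative is the distribution $\varphi\otimes\delta_{t=0}$, which is not an element of $\TA_c(U)[1]$. The paper circumvents this by replacing $\mathrm{ev}_0^*$ with integration against a compactly-supported one-form $\alpha$ on $\RRge$ of total integral $1$; the transpose then sends $\varphi$ to $\varphi\otimes\alpha$, which \emph{is} smooth. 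This is not a cosmetic change: it is essential for the map to be defined at all on the objects in question.

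Second, and more seriously, your assertion that ``the shifted Poisson brackets agree on linear generators since both are induced from the same underlying boundary symplectic pairing'' is false. The image of $\iota^\tau$ (or your $r_U^*$) consists of functionals depending only on base-type fields, whereas the bulk bracket $\varpi_A$ pairs a base-type derivative with a fiber-type derivative; hence $\varpi_A(\iota^\tau\varphi,\iota^\tau\psi)=0$ identically. The boundary bracket $\varpi_B$ is \emph{not} recovered as $\pi\circ\varpi_A\circ(\iota\otimes\iota)$ with the unperturbed maps. In the paper's proof, the Poisson bivector $\Pi$ on the boundary is encoded in the bulk as part of the differential (the term $d^2$), and the transferred bracket arises from the tree with a single $\varpi_A$-vertex only after one perturbs $\iota^\tau$ by $\eta^\tau d^2$ via the homological perturbation lemma: one computes $\pi^\tau\varpi_A(\eta^\tau d^2\iota^\tau\otimes\iota^\tau)+\pi^\tau\varpi_A(\iota^\tau\otimes\eta^\tau d^2\iota^\tau)$ explicitly and matches it with $\varpi_B$. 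This is the substantive calculation, and your proposal skips it entirely. Relatedly, your ``final comparison'' identifying the transferred $\hoPODisj$-structure with the canonical $\PO$-structure requires work: the paper introduces an auxiliary bigrading (by fiber-type inputs and by form-degree along $\RRge$) and shows via a counting argument that all transferred operations associated to trees with more than one vertex vanish, so the transferred structure is strict; only then can one compare generator-by-generator.
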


Theorem \ref{thm: main} is the bulk-boundary correspondence appearing in the title of this article.
Namely, the correspondence relates a $\PO$-factorization algebra on $N\times \RRge$ to one on the boundary $N$.
This theorem is a generalization of the main theorem of \autocite{GRW}.
In that paper, the authors consider only Poisson BV theories which induce \emph{free} bulk-boundary systems.
In that case, one obtains a \emph{strict} quasi-isomorphism of $\PO$-factorization algebras, so the algebraic machinery of operads is necessary for neither the statement nor the proof of the main theorem therein.

We note that Theorem \ref{thm: main} applies for any Poisson BV theory $\cT$ on $N$. 
In particular, $\cT$ may depend on arbitrarily complicated geometry on $N$; there is no need for the theory to be topological in nature.
The universal bulk-boundary system for $\cT$ is, nevertheless, topological along $\RRge$, even if it may depend on complicated geometry on $N$.
We explore a few of these non-topological possibilities in Section \ref{sec: PBV}.

\begin{figure}[h]
    \centering
    \begin{tikzpicture}
            \node at (0,0) (a){};
             \draw[rounded corners] (0,0) arc (270:400: 1 cm and 1.75 cm).. controls +(-.15,.5)..++(0,1) arc (-40:90: 1cm and 1.75cm) node(b){};
             \draw[rounded corners] (b) arc (90: 220: 1cm and 1.75 cm)..controls +(.15,-.5)..++(0,-1) arc (140: 270: 1cm and 1.75 cm);
             \path (a.center) -- +(0,.75) node (c){};
             \draw (c.center) .. controls +(.25,.875).. ++(0,1.75) node[pos = .1](d){} node[pos = .9](e){};
             \draw (d.center) to[bend left] (e.center);
             \path (b.center) -- +(0,-.75) node (f){};
             \draw (f.center) ..controls +(.25,-.875).. ++(0,-1.75) node[pos =.1](g){} node[pos = .9](h){};
             \draw (h.center) to[bend left] (g.center);
            \path (a.center) --(b.center) node[pos = 0.5](mid){};
             
            \node at (05,0) (a1){};
             \draw[dashed,rounded corners] (a1) arc (270:400: 1 cm and 1.75 cm).. controls +(-.15,.5)..++(0,1) arc (-40:90: 1cm and 1.75cm) node(b1){};
             \draw[dashed, rounded corners] (b1) arc (90: 220: 1cm and 1.75 cm)..controls +(.15,-.5)..++(0,-1) arc (140: 270: 1cm and 1.75 cm);
             \path (a1.center) -- +(0,.75) node (c1){};
             \draw[dashed] (c1.center) .. controls +(.25,.875).. ++(0,1.75) node[pos = .1](d1){} node[pos = .9](e1){};
             \draw[dashed] (d1.center) to[bend left] (e1.center);
             \path (b1.center) -- +(0,-.75) node (f1){};
             \draw[dashed] (f1.center) ..controls +(.25,-.875).. ++(0,-1.75) node[pos =.1](g1){} node[pos = .9](h1){};
             \draw[dashed] (h1.center) to[bend left] (g1.center);
             \draw[->] (a.center)--++(7,0);
             \draw[->] (b.center)--++(7,0);
             
            \node at (-3,0) (a2){};
             \draw[rounded corners] (a2) arc (270:400: 1 cm and 1.75 cm).. controls +(-.15,.5)..++(0,1) arc (-40:90: 1cm and 1.75cm) node(b2){};
             \draw[rounded corners] (b2) arc (90: 220: 1cm and 1.75 cm)..controls +(.15,-.5)..++(0,-1) arc (140: 270: 1cm and 1.75 cm);
             \path (a2.center) -- +(0,.75) node (c2){};
             \draw (c2.center) .. controls +(.25,.875).. ++(0,1.75) node[pos = .1](d2){} node[pos = .9](e2){};
             \draw (d2.center) to[bend left] (e2.center);
             \path (b2.center) -- +(0,-.75) node (f2){};
             \draw (f2.center) ..controls +(.25,-.875).. ++(0,-1.75) node[pos =.1](g2){} node[pos = .9](h2){};
             \draw (h2.center) to[bend left] (g2.center);
             \path (a2.center) -- (b2.center) node[pos = 0.5](mid2){};
             \path (mid.center) -- (mid2.center) node[pos = 0.4](arrowstart){} node[pos=0.6](arrowend){};
             \draw[->] (arrowstart.center)--(arrowend.center) node[pos=.5, above]{$\rho$};

             \path (a2.center) -- ++(-1,-.3) node (leftbrace1){};
             \draw[snake=brace, mirror snake] (leftbrace1.center)--++(2,0) node[pos=.1, text width = 0.25cm, align=center, below]{
             \begin{varwidth}{1cm}
             \begin{align*}
                \mathrm{O}&\mathrm{bs}^{cl}_\cT\\
                &\text{\rotatebox[origin =c]{90}{$\simeq$}}\\
                \rho_*\mathrm{O}&\mathrm{bs}^{cl}_{\cZ(\cT),\cT}
             \end{align*}
             \end{varwidth}
             };

             \path (a.center) -- ++(-1,-.3) node (leftbrace2){};
             \draw[snake=brace, mirror snake] (leftbrace2.center)--++(8.25,0) node[pos=.4, text width = 0.25cm, align=center, below]{
             \begin{varwidth}{1cm}
             \begin{align*}
                &\Obcl_{\cZ(\cT),\cT}
            \end{align*}
            \end{varwidth}
             };
    \end{tikzpicture}
    \caption{A schematic of Theorem \ref{thm: main}. The theorem relates two factorization algebras on the boundary, one of which originates on the half-cylinder on the right.}
    \label{fig: schema}
\end{figure}
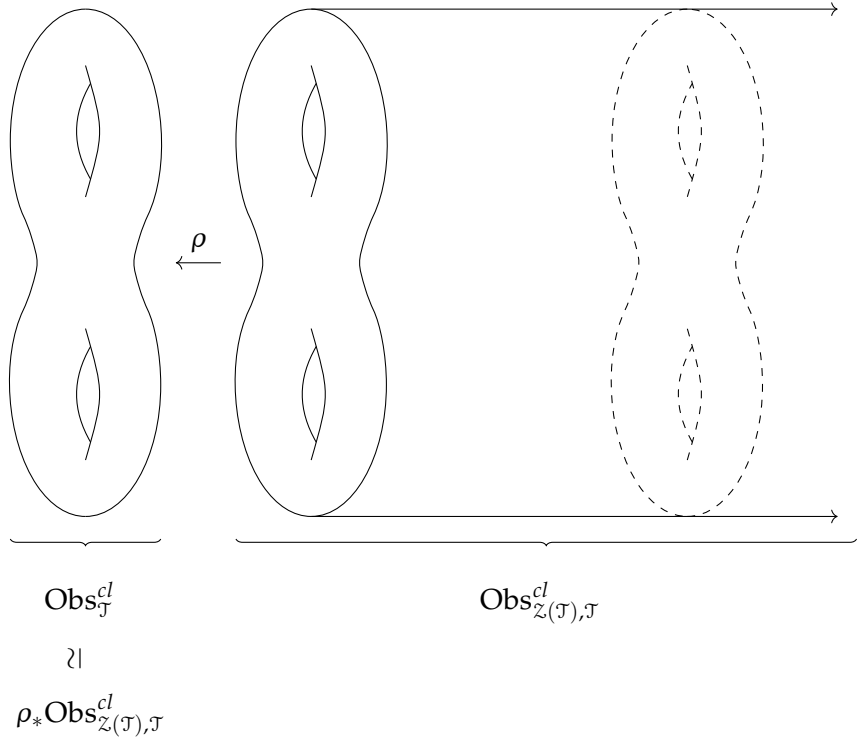

Figure \ref{fig: schema} gives a schematic of Theorem \ref{thm: main}. 
Let us give some words of interpretation for the theorem and its accompanying picture.
One should think of $\rho_*\left(\Obcl_{\cZ(\cT),\cT}\right)$ as the ``boundary'' part of the observables for the bulk-boundary system.
Theorem \ref{thm: main} therefore describes the way in which the $\PO$-factorization algebra $\Obcl_{\cZ(\cT),\cT}$ knows everything about the $\PO$-prefactorization algebra $\Obcl_\cT$.
Note that by considering the behavior of the factorization algebra $\Obcl_{\cZ(\cT),\cT}$ on the interior $N\times \RR_{>0}$, we presumably obtain more information about $\Obcl_{\cZ(\cT),\cT}$, namely ``pure bulk'' information.
The full factorization algebra $\Obcl_{\cZ(\cT),\cT}$ also encodes what may be understood as an ``action'' of the bulk observables on the boundary observables.
We will conjecture more about this ``pure bulk'' and ``fully bulk-boundary'' information below.
Namely, we will claim that the ``fully bulk-boundary'' information is determined from $\Obcl_\cT$ by a universal property.

For the reader already familiar with factorization algebras, it may seem somewhat surprising that the pushforward $\rho_*(\Obcl_{\cZ(\cT),\cT})$ is described as the boundary part of the observables, since this pushforward assigns to an open subset $U\subseteq N$ the space $\Obcl_{\cZ(\cT),\cT}(U\times \RRge)$ of observables on the whole half-cylindrical set $U\times \RRge$.
To address this potential issue, we note that $\Obcl_{\cZ(\cT),\cT}$ is stratified locally constant along $\RRge$; in other words, for any open $U\subseteq N$ and any inclusion of open intervals $I_1\subseteq I_2\subset \RRge$, the structure map
\[
\Obcl_{\cZ(\cT),\cT}(U\times I_1)\to \Obcl_{\cZ(\cT),\cT}(U\times I_1)
\]
is a quasi-isomorphism.
The same statement is true if $I_1$ and $I_2$ are both intervals of the form $[0,a)$.
Hence, we need not make much distinction between $\Obcl_{\cZ(\cT),\cT}(U\times [0,\epsilon))$ and $\Obcl_{\cZ(\cT),\cT}(U\times \RRge)$ for any $\epsilon>0$.
In fact, as $U$ varies over the open subsets of $N$, the two resulting factorization algebras on $N$ are naturally equivalent; the factorization algebra resulting from the latter object is precisely $\rho_*\left(\Obcl_{\cZ(\cT),\cT}\right)$, and the former object--in the limit $\epsilon\to 0$--can be properly called the ``boundary observables.''

Theorem \ref{thm: main} should not be at all surprising from a ``zoomed-out,'' schematic standpoint, as we now explain.
We emphasize that the following discussion is schematic because there are many subtleties in defining non-degeneracy conditions for shifted symplectic and coisotropic structures on infinite-dimensional spaces.
Nevertheless, it is illustrative to ignore these subtleties for the moment.
The universal bulk-boundary system of $\cT$ comes with a sheaf $\sE$ of bulk fields on $N\times\RRge$, a 0-shifted symplectic sheaf of boundary fields $\sEb$ on $N$, and a sheaf $\sL$ on $N$ corresponding to the boundary condition.
There are coisotropic maps of sheaves
\[
\xymatrix{
& \sE\ar[d]\\
\mathrm{inc}_*\sL\ar[r] & \mathrm{inc}_*\sEb,
}
\]
where $\mathrm{inc}$ is the inclusion $N\times\{0\}\to N\times \RRge$.
Open-by-open on $N\times \RRge$, the factorization algebra $\Obcl_{\cZ(\cT),\cT}$ is essentially the space of functions on the homotopy pullback $\sE\times^h_{\mathrm{inc}_*\sEb}\mathrm{inc}_*\sL$, and the $\PO$-algebra structure it acquires as a result is from the shifted Poisson structure on this homotopy pullback \autocite{safronovpoissreduction}.
In a general bulk-boundary system, there is a Poisson BV structure on the boundary condition $\sL$ induced from taking the coisotropic intersection of $\sL$ with $\sEb$ inside $\sEb$; Butson and Yoo spell this out in detail.
Moreover, the universal bulk-boundary system of a Poisson BV theory is constructed in such a way that the induced Poisson BV structure on $\sL$ it acquires from being a boundary condition for this theory coincides with the original Poisson BV structure.
Moreover, on open subsets of the form $U\times \RRge$, where $U\subseteq N$, the map $\sE\to \mathrm{inc}_*\sEb$ is an equivalence.
Hence 
\[
\sE(U\times \RRge)\times^h_{\sEb(U)}\mathrm\sL(U) \simeq \sL(U)\simeq \sEb(U)\times^h_{\sEb(U)}\sL(U);
\]
on an open subset $U\subseteq N$, $\rho_*\Obcl_{\cZ(\cT),\cT}(U)$ is the space of functions on the first homotopy pullback, while $\Obcl_{\cT}(U)$ is the space of functions on this second homotopy pullback, and so the immediately preceding equivalences of shifted Poisson spaces give strong evidence for the statement in Theorem \ref{thm: main}.
Spelling out these constructions sheaf theoretically and taking the appropriate functional-analytic considerations into account are among the problems addressed in this paper.

\subsection{Theorem \ref{thm: main} and Kontsevich's Deformation Quantization}

Let us comment on what Theorem \ref{thm: main} has to say about the deformation quantization of Kontsevich.
Let $V$ be a vector space with a formal Poisson structure, i.e. there is an element $\Pi\in \prod_{n\geq 0} \Sym^n(V^*) \otimes \Lambda^2 V$ such that $[\Pi,\Pi]=0$, where $[\cdot,\cdot]$ is the Schouten-Nijenhuis bracket on (formal) polyvector fields on $V$.
Then, $\widehat{\sO}(V)$, the algebra of power series on $V$, possesses a Poisson bracket of cohomological degree 0.
(The reader may, without missing the point, replace $\widehat{\sO}(V)$ with $\cinfty(P)$, where $P$ is a Poisson manifold. However, the results of this paper and of \autocite{butsonyoo} and \autocite{classicalarxiv} do not apply immediately to the case of general $P$.)
On the other hand, we have discussed observable algebras with Poisson brackets of cohomological degree +1.
In particular, the factorization algebra $\Obcl_V$ of observables for topological Poisson mechanics is a locally constant $\PO$-factorization algebra on $\RR$.
By Theorem 5.5.4.10 of \autocite{higheralgebra}, the $\infty$-category of locally constant factorization algebras on $\RR$ (with values in any ``sufficiently nice'' symmetric-monoidal $\infty$-category $\cC$) is equivalent to the $\infty$-category of algebra objects in $\cC$.
Hence, corresponding to $\Obcl_V$, we obtain a dg associative algebra with values in the symmetric-monoidal category of $\PO$-algebras.
(To be precise, one needs to be careful here, since a $\PO$-factorization algebra is not the same as a factorization algebra whose target symmetric monoidal category is that of $\PO$-algebras; the descent conditions for the two are different. However, since we simply wish to give a schematic argument, we ignore this subtlety for the moment.)
Further, by a result of Safronov \autocite{safronovadditivity}, the $\infty$-category of algebra objects in the symmetric-monoidal $\infty$-category of $\PO$-algebras is equivalent to the $\infty$-category of Poisson algebras.
Hence, $\Obcl_V$ corresponds to a Poisson algebra. 
It is expected, though not known, that this Poisson algebra is equivalent to $\widehat{\sO}(V)$.
If this expectation is indeed a truth, then Theorem \ref{thm: main} implies that the observables of the Poisson sigma model (considered as a $\PO$-factorization algebra) contain all of the information of the Poisson algebra $\widehat{\sO}(V)$. 
This explains the appearance of the Poisson sigma model in the deformation quantization of Poisson manifolds: namely, instead of quantizing the Poisson BV theory on $\RR$ directly, Kontsevich quantized a symplectic BV theory on $\RR\times \RRge$. This symplectic BV theory ``knows all about'' the original Poisson object we set out to quantize.

Theorem \ref{thm: main} may therefore be viewed as justification for the following procedure for the quantization of Poisson BV theories.
First, given a Poisson BV theory $\cT$ on $N$, construct the universal bulk-boundary system on $N\times \RRge$.
Then, study the quantization of this bulk-boundary system using the techniques of \autocite{ERthesis}.
In this way, one can avoid the difficulties posed by the generalization from BV theories on $N$ to Poisson BV theories on $N$.
The consequent trade-off, however, is that one needs to study theories on the manifold $N\times \RRge$, which has a boundary and has larger dimension than $N$ itself.
On the other hand, in studying bulk-boundary systems, one may make use of well-developed machinery for the investigation of boundary-value problems.

\subsection{What Makes the Universal Bulk-Boundary System ``Universal''?}

We conclude with a conjecture which aims to justify the appearance of the word ``universal'' appearing in the term ``universal bulk-boundary system.''
\begin{conjecture}
\label{conj: universalprop}
Let $\cT$ be a PBV theory on $N$, and $\cZ(\cT)$ its universal bulk theory. Let $\rho: N\times \RR_{\geq 0}\to N$ and $\tau: N\times \RR_{\geq 0}\to \RR_{\geq 0}$ denote the respective projections. Then the factorization algebra of classical observables $\Obcl_{\cZ(\cT), \cT}$ for the bulk-boundary system $(\cZ(\cT),\cT)$ is terminal amongst $\PO$-factorization algebras $\cF$ such that 
\begin{enumerate}
    \item there is an equivalence $\Obcl_\cT\to \rho_*\cF$ of $\PO$-factorization algebras on $N$,
    \item for any $U\subseteq N$, the factorization algebra (on $\RRge$) $\tau_*\left( \left.\cF\right|_{U\times \RR_{\geq 0}}\right)$ is stratified locally constant (i.e. inclusions of the form $[0, a) \subseteq [0,a')$ and of the form $(a,b)\subseteq (c,d)$ separately induce equivalences).
\end{enumerate}
In other words, for any $\cF$ satisfying (1) and (2) above, there exists (up to an appropriate notion of homotopy) a map $\cF\to \Obcl_{\cZ(\cT), \cT}$ of factorization algebras on~$N\times \RR_{\geq 0}$.  
\end{conjecture}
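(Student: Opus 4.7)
The plan is to realize $\Obcl_{\cZ(\cT),\cT}$ as the \emph{center} of $\Obcl_\cT$ in the sense of Safronov's additivity theorem~\autocite{safronovadditivity}, and then to extract terminality as a formal consequence. Fix $\cF$ satisfying conditions (1) and (2). I first restrict $\cF$ to the two strata: let $\cF^\partial$ denote the pullback to $N\times\{0\}$, which by (1) is equivalent to $\Obcl_\cT$, and let $\cF^\circ$ denote the restriction to $N\times\RR_{>0}$. The ``open interval'' half of hypothesis~(2), combined with the standard comparison between locally constant factorization algebras on $\RR_{>0}$ and $E_1$-algebras (\autocite{higheralgebra}, Theorem~5.5.4.10), exhibits $\cF^\circ$ as an $E_1$-algebra $\cA_\cF$ in the symmetric monoidal $\infty$-category of $\PO$-factorization algebras on $N$. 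The ``collar'' half of hypothesis~(2) controls the behavior at the boundary and, together with the gluing data of $\cF$ across $N\times\{0\}$, promotes $\cF^\partial\simeq \Obcl_\cT$ to a module for the $E_1$-algebra $\cA_\cF$.

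Next, I invoke the additivity correspondence: an $E_1$-algebra in $\PO$-factorization algebras is equivalent to a $\PP_1$-factorization algebra, and a $\PP_1$-factorization algebra acting on a $\PO$-factorization algebra as a module is equivalent to a coisotropic map in the shifted-Poisson sense --- exactly the shape of the coisotropic data $\sL\to\sEb\leftarrow \sE$ sketched in the introduction. Under this correspondence, the pair $(\cA_\cF,\Obcl_\cT)$ becomes a $\PP_1$-factorization algebra acting coisotropically on the boundary $\PO$-factorization algebra.

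The Butson--Yoo construction of $\cZ(\cT)$ is set up precisely so that $\Obcl_{\cZ(\cT)}|_{N\times\RR_{>0}}$, regarded via additivity, realizes the \emph{terminal} $\PP_1$-factorization algebra acting coisotropically on $\Obcl_\cT$ --- that is, its center. Granted this identification, the universal property yields a canonical map $\cA_\cF\to \Obcl_{\cZ(\cT)}|_{N\times\RR_{>0}}$ of $E_1$-algebras in $\PO$-factorization algebras on $N$, the module compatibility of which assembles, via the gluing across $N\times\{0\}$ underlying the homotopy pullback description $\sE\times^h_{\mathrm{inc}_*\sEb}\mathrm{inc}_*\sL$, into the sought morphism $\cF\to \Obcl_{\cZ(\cT),\cT}$ of $\PO$-factorization algebras on $N\times\RRge$, unique up to contractible choice.

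The main obstacle is the center interpretation of $\Obcl_{\cZ(\cT)}$. Safronov's additivity is originally phrased in derived algebraic geometry, so it must be transported to the analytic setting of factorization algebras built from nuclear/convenient vector spaces, with all the local-to-global descent and smoothness/support subtleties of Costello--Gwilliam handled as in the proof of Theorem~\ref{thm: main}. This amounts to a factorization-algebraic higher Deligne-type statement in the $\PO$ context, together with a comparison theorem showing that Butson--Yoo's explicit construction of $\cZ(\cT)$ realizes the abstract center. The analytic control of this comparison --- including verification that the homotopy pullback $\sE\times^h_{\mathrm{inc}_*\sEb}\mathrm{inc}_*\sL$ computes the universal coisotropic extension at the level of local functionals --- is expected to be the most technically demanding step, and likely requires refinements of the $\hoPODisj$-resolution techniques introduced in this paper. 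Once this center interpretation is established, the rest of the argument is essentially formal.
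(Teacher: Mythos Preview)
The statement you are attempting to prove is labeled \emph{Conjecture} in the paper, and the paper does not prove it. What follows the conjecture in the paper is an explicitly heuristic discussion (``let us marshal some folk intuitions in favor of this conjecture''), not a proof. Your proposal is therefore not being compared against a proof but against a motivational sketch, and in fact your outline tracks that sketch quite closely: both invoke the center of an object in a symmetric monoidal $\infty$-category, Safronov's Poisson additivity, the equivalence between locally constant factorization algebras on $\RR$ and $E_1$-algebras, and (implicitly in your case, explicitly in the paper's via \autocite{Thomas_2016}) the Swiss-cheese-type universal property of the center.

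The genuine gap in your argument is the one you yourself flag, and it is exactly the gap the paper flags as open. You write that Butson--Yoo's construction of $\cZ(\cT)$ ``is set up precisely so that $\Obcl_{\cZ(\cT)}|_{N\times\RR_{>0}}$ \ldots\ realizes the \emph{terminal} $\PP_1$-factorization algebra acting coisotropically on $\Obcl_\cT$ --- that is, its center.'' The paper says the opposite: ``This usage of the word `center' is suggestive, but---to our knowledge---it is not known whether $\cZ(\Obcl_\cT)$ satisfies the relevant universal property.'' So the step you label ``granted this identification'' is precisely the content of the conjecture, not an available input. Your last paragraph acknowledges this honestly, but that means what you have written is a strategy for attacking the conjecture rather than a proof of it. There are also subsidiary issues the paper raises that your sketch passes over: the distinction between $\PO$-factorization algebras and factorization algebras valued in $\PO$-algebras (the descent conditions differ), and the need to transport Safronov's additivity and the polyvector-field description of the center (which the paper notes is itself conjectural) into the convenient-vector-space/analytic setting. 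In short: your proposal is not wrong as a plan, and it is essentially the same plan the paper offers as motivation, but neither constitutes a proof.
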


Let us marshal some folk intuitions in favor of this conjecture. To this end, it is instructive to consider the case of $E_n$-algebras first. Given an $E_n$-algebra $A$, there exists an $E_{n}$-algebra $\cZ(A)$ with the following properties:
\begin{enumerate}
    \item The $E_n$-algebra $\cZ(A)$ comes equipped with a map $1\to \cZ(A)$.
    \item There is a map $\cZ(A)\otimes A \to A$.
    \item There is a commutativity datum for the diagram
    \[
    \xymatrix{
    & \cZ(A)\otimes A\ar[rd]&\\
    A\ar[ur]\ar[rr]^{\id} & & A
    }.
    \]
    \item The $E_n$-algebra $\cZ(A)$ is final amongst all $E_n$-algebras satisfying the preceding 3 properties.
\end{enumerate}
These four properties are taken in a suitably homotopical sense.
Further, the universal property of $\cZ(A)$ endows it with the structure of an algebra object in the symmetric-monoidal $\infty$-category of $E_n$-algebras. By Dunn-Lurie additivity (\autocite[Theorem 2.9]{dunn}and \autocite[Theorem 5.1.2.2]{higheralgebra}), this implies that $\cZ(A)$ has the structure of an $E_{n+1}$-algebra extending its $E_n$-algebra structure.
More generally, if $A$ is an object in a symmetric monoidal $\infty$-category $\sC$, then an object $\cZ(A)$ satisfying the above four properties is called the \emph{center} of $A$. 
The universal property of $\cZ(A)$ gives it the structure of an algebra object in $\sC$.
For more precise statements, we direct the reader to Section 5.3 of \autocite{higheralgebra}.

We may take, as our symmetric-monoidal $\infty$-category $\sC$, the category of $\PO$-algebras.
In this case, the center is an algebra object in the category of $\PO$-algebras, so by Poisson additivity \autocite{safronovadditivity}, gives a $\POne$-algebra, i.e. an ``ordinary'' Poisson algebra.
There is a---to our knowledge---conjectural description of this $\POne$-algebra in terms of the polyvector fields on the original $\PO$-algebra (see \autocite{safronovpoissreduction}, Remark 1.7.)
This polyvector field description of the center is manifestly a $\POne$-algebra.
Generalizing, we may also choose as $\sC$ the category of $\PO$-factorization algebras on $N$.
Tracing through the constructions, we expect that the center of such a factorization algebra should be a $\POne$-factorization algebra on $N$.
Indeed, Butson and Yoo (implicitly) construct a $\POne$-factorization algebra $\cZ(\Obcl_{\cT})$ on $N$ which assigns to an open subset $U\subseteq N$ a version of the algebra of shifted polyvector fields on the $\PO$-algebra $\Obcl_\cT(U)$. (This factorization algebra is the factorization algebra of observables for the \emph{local higher Poisson center}, Definition 3.17 ibid. This usage of the word ``center'' is suggestive, but---to our knowledge---it is not known whether $\cZ(\Obcl_\cT)$ satisfies the relevant universal property.)

How do we relate the preceding discussion, which happened solely on $N$, to Conjecture \ref{conj: universalprop}, where the factorization algebra $\Obcl_{\cZ(\cT),\cT}$ on $N\times \RRge$ appears?
In this case, it is instructive to return again to the center $\cZ(A)$ of an $E_n$-algebra $A$, and another universal property it possesses. 
As proved in \autocite{Thomas_2016}, $\cZ(A)$ is ``the universal $E_{n+1}$-algebra acting on $A$.'' 
Slightly more precisely, it is the final object in the category of Swiss cheese algebras whose underlying $E_n$-algebra is $A$.
The Swiss Cheese operad was introduced by Voronov \autocite{voronovsc}; a modern understanding of the category of Swiss Cheese algebras is in terms of stratified locally constant factorization algebras on the stratified space $\RR^n\subset \HH^{n+1}$ (where $\HH^{n+1}$ is the upper half-space) (see, e.g. the abstract of \autocite{idrissi2020formality}).
Using this ``modern understanding,'' together with the fact that $E_n$-algebras can be modelled by locally constant factorization algebras on $\RR^n$ \autocite{higheralgebra}, we may rephrase the main theorem of Thomas in \autocite{Thomas_2016} as follows: given a locally-constant factorization algebra $A$ on $\RR^n$, $\cZ(A)$ is terminal amongst locally constant factorization algebras on $\RR^{n+1}$ which appear as the ``bulk algebra'' of a stratified locally constant factorization algebra on $\RR^n\subset \HH^{n+1}$ whose ``boundary'' algebra is $A$.
Conjecture \ref{conj: universalprop} is a translation of this universal property into the realm of $\PO$-factorization algebras.
The factorization algebras $\Obcl_{\cZ(\cT),\cT}$ and $\cZ(\Obcl_\cT)$ are evidently not the same objects on the nose; they live on different spaces. 
However, $\Obcl_{\cZ(\cT),\cT}$ is constructed from the same data as $\cZ(\Obcl_\cT)$. Indeed, if we let $\fX$ denote the space of fields on which $\cZ(\Obcl_\cT)$ is the algebra of functions, then the space of fields for the universal bulk theory $\cZ(\cT)$ is (with some asterisks)~$\underline{\mathrm{Map}}(\RR_{dR}, \fX)$.
Moreover, we expect the two objects to be related in the following way.
Note that for each $U\subseteq N$, the factorization algebra on $\RRge$
\[
\tau_*\left(\left.\Obcl_{\cZ(\cT),\cT}\right\vert_{U\times \RRge}\right)
\]
(notation from Conjecture \ref{conj: universalprop}) is stratified locally constant, so in particular, it gives a locally constant factorization algebra on $\RRgt$.
Now, consider the equivalences of $\infty$-categories
\[
\POne\text{-}\mathrm{Alg} \simeq \mathrm{AsAlg}(\PO\text{-}\mathrm{Alg}) \simeq \mathrm{FA}^{l.c.}_{\RR_{>0}}(\PO\text{-}\mathrm{Alg}),
\]
where the first equivalence is the main theorem of \autocite{safronovadditivity} and the second result is proved by Lurie in \autocite{higheralgebra}.
We expect that under this equivalence, the objects $\cZ(\Obcl_\cT)(U)$ and $\tau_*\left(\left.\Obcl_{\cZ(\cT),\cT}\right|_{U\times \RRgt}\right)$ coincide, and give different models for the two different universal properties of the center.
We also expect a compatibility between these identifications as $U$ varies over the open subsets of $N$; however, the nature of this compatibility isn't entirely clear to the author at this moment.

\subsection{Plan of the Paper}
The paper is organized as follows.
In Section \ref{sec: PODisj}, we set notations related to the theory of (colored) operads (Section \ref{subsec: operadconventions}), and we describe the colored operad governing $\PO$-prefactorization algebras (Section \ref{subsec: PODisj}).

In Section \ref{sec: PBV}, we briefly describe the background necessary to understand the objects appearing in Theorem \ref{thm: main}, including background on Poisson BV theories and the $\PO$-factorization algebras associated to a Poisson BV theory. 
We give a few examples of these objects, and apply Theorem \ref{thm: main} to those examples.

In Section \ref{sec: proof}, we execute the proof of Theorem \ref{thm: main}. The main tools of the proof are the use of the homological perturbation lemma to construct open-by-open deformation retractions
\[
\xymatrix{
\Obcl_\cT(U) \ar@<1ex>[r]^-{\iota(U)} & \Obcl_{\cZ(\cT)}(U\times \RRge)\ar@<1ex>[l]^-{\pi(U)}{\ar@(ur,dr)[]+R+<0pt,4pt>;{[]+R+<0pt,-4pt>}^{\eta(U)}}
}
\]
and the homotopy transfer of $\hoPODisj$-algebra structures, as made explicit in Section \ref{subsec: operadconventions}.

\subsection{Conventions}
\label{sec: conventions}
\begin{itemize}
    \item The symbol $\mathbb K$ denotes either of the fields $\RR$ or $\CC$.
    \item $\PO$ is the operad whose algebras are Poisson algebras with Poisson bracket of cohomological degree +1. For precise signs, refer to Definition 1.1. of \autocite{safronovpoissreduction} 
    \item If $V$ and $W$ are cochain complexes, then a \emph{strong deformation retraction of $V$ onto $W$} is a triple $(\iota,\pi,\eta)$, where 
    \[
    \pi: V\leftrightarrow W:\iota
    \]
    are cochain maps satisfying $\pi\iota = \id_W$; $\eta$ is a cochain homotopy witnessing the exactness of $\id_V-\iota\pi$; and the three maps satisfy the ``side conditions''
    \[
    \eta^2 = \pi \eta = \eta\iota = 0,
    \]
    which simplify many computations.
    \item If we use a normal-font Latin letter (e.g. $L$) to denote a bundle on a manifold $N$, then we use the corresponding script-font letter (e.g. $\sL$) to denote the corresponding sheaf of sections on $N$. We will occasionally also use the notation $\cinfty(U; L)$ for the sections of $L$ over $U$.
    \item If $V$ is a finite-dimensional vector space, we denote by $V^*$ its dual. Similarly, if $V\to N$ is a vector bundle, we denote by $V^*$ its fiberwise dual.
    \item We denote by $\Dens_N$ the bundle of densities on $N$. Its sheaf of sections is denoted $\Denssheaf_N$.
    \item If $V$ is a vector space and $N$ is a manifold, we use $\underline{V}$ to denote the trivial bundle on $N$ with fiber $V$.
    \item If $V\to N$ is a vector bundle, $V^!$ denotes the bundle $V^*\otimes \Dens_N$.
    \item We denote by $\Lambda^\sharp T^* N$ the ($\ZZ$-graded) exterior algebra bundle on the cotangent bundle of $N$. Its sheaf of sections is $\Omega^\sharp_N$. For the special case $N=\RRge$, which appears prominently in this article, we write simply $\Omega^\sharp$. $\Omega^\bullet_N$ denotes the de Rham complex of sheaves on $N$. 
    Finally, we write $\Omega^\sharp_D$ for the sheaf on $\RRge$ consisting of forms whose pullback to the boundary point vanishes (the ``D'' stands for Dirichlet).
    \item If $V\to N$ is a bundle, then we let $\sV_c$ denote the cosheaf of compactly-supported sections of $V$. Following this notation, $\Omega^\sharp_{c,D}$ is the precosheaf of compactly-supported forms on $\RRge$ whose pullback to the boundary point vanishes.
    \item If $V\to N$ is a bundle, then $V_x$ denotes the fiber of $V$ over $x\in N$.
    \item If $V_1$ is a bundle over $N_1$ and $V_2$ is a bundle over $N_2$, then $V_1\boxtimes V_2$ is the bundle over $N_1\times N_2$ whose fiber at $(x,y)$ is $(V_1)_x\otimes (V_2)_y$.
    \item We denote by $\mathbb{S}_k$ the permutation group on $k$ letters. If $V$ is an $\mathbb{S}_k$-module, then $V_{\mathbb{S}_k}$ denotes the space of coinvariants for the action, and $V^{\mathbb{S}_k}$ the space of invariants.
    \item We use cohomological $\ZZ$-grading conventions, i.e. all differentials have cohomological degree $+1$, and if $V$ is a complex, $V[1]$ denotes the same complex shifted \emph{down} by one.
    \item In keeping with the notation $\Omega^\sharp$, given a cochain complex $B$, we will let $B^\sharp$ denote the underlying graded object.
    \item We often deal with a collection of objects $A(U)$, one for each open subset $U\subseteq N$. We will sometimes omit the $(U)$ from the notation when this is clear from context.
    \item When we say ``tree,'' we mean ``rooted tree,'' which is described by a set $V(T)$ of vertices, a set $HE(T)$ of half-edges, a map $HE(T)\to V(T)$, an involution $\sigma: HE(T)\to HE(T)$, and a distinguished element $r\in HE(T)^{\mathbb{S}_2}$. We refer to $r$ as the \emph{root} of $T$, the other fixed points of $\sigma$ as the \emph{leaves} of $T$, and the $\sigma$-orbits of size 2 as the \emph{edges} or \emph{internal edges} of $T$.
\end{itemize}

\subsection{Conventions Concerning Functional Analysis}
In this section, we discuss the functional analytic context in which we work.
In this article, we require very little actual functional analysis, but we do rely on some general categorical properties of our underlying category of vector spaces.
The goal of this section is to present those properties in as brief a manner as possible.
To put it differently, the goal of this section is to convince you that this section is almost unnecessary.

Whenever the word ``(co)chain complex'' appears in the body of the text, it is to be understood to mean ``(co)chain complex of differentiable vector spaces over $\RR$.'' 
A differentiable vector space is in particular a vector-space-valued sheaf on the site of smooth manifolds.
For example, given a manifold $N$, one obtains the following sheaf
\[
X\mapsto \cinfty(N\times X),
\]
which is meant to encode the topological vector space $\cinfty(N)$.
The category $\DVS$ of differentiable vector spaces has the following properties:
\begin{itemize}
    \item It is abelian.
    \item It is enriched over itself.
    \item Given a manifold $N$, the vector space $\cinfty(N)$ is naturally a differentiable vector space.
    \item It has a multicategory structure such that 
    \[
    \Hom_\DVS(\cinfty(N_1), \cinfty(N_2); V)\cong \Hom_\DVS(\cinfty(N_1\times N_2);V),
    \]
    i.e. the multicategory structure resembles the completed projective tensor product of locally convex topological vector spaces.
\end{itemize}
See Appendix B of \autocite{CG1} for more concrete details.
Suffice it to note that it forms a suitable context for homological algebra with vector spaces of smooth functions on manifolds.

Though the category $\DVS$ has many nice properties, a differentiable vector space can be an unwieldy object, since one needs to remember what it assigns to every manifold.
In practice, we will often construct differentiable vector spaces out of \emph{convenient vector spaces} (see again Appendix B of \autocite{CG1} for the details necessary for this article or \autocite{krieglmichor} for the original reference).
Convenient vector spaces are more concrete than differentiable vector spaces: they arise as topological vector spaces satisfying an additional completeness property.
The category $\CVS$ is a full subcategory of $\DVS$.
It is closed symmetric monoidal. We denote by $\hotimes_\beta$ its symmetric monoidal product.
We use the notation $\Hom_\CVS(\cdot, \cdot)$ to denote the internal hom bifunctor (we have no use for the unenriched hom functor, so we do not fuss about having notation which distinguishes the two).
The fully-faithful embedding $\CVS\to \DVS$ has the following properties:
\begin{itemize}
    \item It preserves limits.
    \item It preserves inner homs.
    \item The multicategory structure is represented on $\CVS$ by $\hotimes_\beta$, i.e. for any convenient vector spaces $W_1,W_2, V$, we have that~$\Hom_\DVS(W_1,W_2;V) \cong \Hom_\DVS(W_1\hotimes_\beta W_2; V)$.
\end{itemize}
The functor $\CVS\to \DVS$ \emph{does not} preserve colimits.
It is therefore usually important to compute cohomology or construct quasi-isomorphisms in $\DVS$ and not $\CVS$. In this article, we will always construct deformation retracts, so this subtlety will not appear.

We mention one last reason that functional-analytic subtleties do not appear: the equivalence of Theorem \ref{thm: main} more or less amounts to ``integrating out'' the normal direction $\RRge$.
To accomplish this integration, the analytical tools involved are no more complicated than those involved in constructing a deformation retraction of the de Rham complex $\Omega^\bullet_\RR$ onto its cohomology.

\subsection{Acknowledgements}
The author would like to warmly thank Dylan Butson, Damien Calaque, Owen Gwilliam, Najib Idrissi, Nima Moshayedi, Pavel Safronov, Stephan Stolz, Peter Teichner, Minghao Wang, Brian Williams, and Philsang Yoo for conversations related to the material presented here.

He would like to thank V\'ictor Carmona and Thomas Willwacher for separately suggesting the use of the bar-cobar resolution of $\PO\text{-}\Disj_N$.

\section{Background on  $\PO$-Factorization Algebras and their Homotopy Theory}
\label{sec: PODisj}
This section and the next are devoted to a review of relevant background material.
First, in this section, we discuss the general homotopy theory of $\PO$-prefactorization algebras from the standpoint of operad theory.
More precisely: in Section \ref{subsec: operadconventions}, we establish notation and recall various important facts concerning the theory of operads. In Section \ref{subsec: PODisj}, we apply that general theory of operads to the colored operad $\PO\text{-}\Disj_N$ whose algebras are $\PO$-prefactorization algebras.

Then, in Section \ref{sec: PBV}, we will construct the objects appearing in Theorem \ref{thm: main}.

\subsection{Fixing Notation for Operads and their Algebras}
\label{subsec: operadconventions}
We refer the reader to the reference \autocite{LodayVallette} for the general theory of operads which we apply here.
In particular, in this subsection, we make absolutely no claims about originality.
We use the following notation for general operads and cooperads, 
which follows that of \autocite{LodayVallette} with one exception: we use cohomological instead of homological grading conventions.
We will tacitly assume all of our operads to be colored by a set $S$, whether or not this is mentioned explicitly in the terminology and notation.
Given an augmented dg-operad $\sP$ and a conilpotent dg-cooperad $\sC$, we let:
\begin{itemize}
    \item If $\sP$ is an $\mathbb{S}$-module, then $\cT(\sP)$ and $\cT^c(\sP)$ are respectively the free operad and cooperad on $\sP$. As $\bbS$-modules, they are the same: the elements of arity $k$ are formal linear combinations of rooted trees $T$ with a bijection from the set of leaves of $T$ to $\{1,\ldots,k\}$; vertices of $T$ with $\ell$ inputs are labeled by $\ell$-ary elements of $\sP$.
    \item $\mathrm {B}\sP$ denote the bar construction on $\sP$, which gives a colored conilpotent dg-cooperad,
    \item $\Omega \sC$ denote the cobar construction on $\sC$, which gives an augmented colored dg-operad,
    \item If $\sP$ is an operad, $\mu$ is an operation in $\sP$, and $A$ is a $\sP$-algebra, then we let $\mu_A$ denote the representation of $\mu$ in $A$.
\end{itemize}

Given an augmented operad $\sP$, there is always a quasi-isomorphism
\[
\Omega \mathrm{B} \sP \overset{\sim}{\to} \sP
\]
of operads.
The operad $\Omega \mathrm{B}\sP$ has in particular an operation $\tilde \mu$ for any operation $\mu$ in $\sP$, but $\tilde \mu \circ_i \tilde \nu \neq \widetilde{\mu\circ_i \nu}$.
Instead there is another operation in $\Omega\mathrm{B} \sP$ exhibiting a homotopy between the two operations.
There are further operations witnessing homotopies between homotopies, and so on.
In fact, the operad $\Omega \mathrm{B}\sP$ has a generating operation for every \emph{tree} whose vertices are labeled by elements of $\sP$ of the appropriate arity. The operation $\tilde \mu$ corresponds to the one-vertex tree with vertex labeled by $\mu$.

In the model category of operads introduced by Hinich \autocite{hinichoperads}, the operad $\Omega \mathrm{B}\sP$ is a cofibrant replacement for $\sP$.
One of the nice things about resolutions of an operad $\sP$ of the form $\Omega\sC$, where $\sC$ is a conilpotent operad, is that---although the notion of weak equivalence of $\sP$-algebras is not an equivalence relation---there is a notion of weak-equivalence of $\Omega \sC$-algebras which is an equivalence relation and which coincides with the equivalence relation generated by weak equivalence for $\sP$-algebras.
Indeed, one may define an $\infty$-morphism $A\rightsquigarrow A'$ of $\Omega B \sP$-algebras to be a morphism of the cofree coalgebras
\[
\mathrm B \sP(A)\to \mathrm B \sP(A'),
\]
where both objects are endowed with the unique square-zero coderivation induced from the $\Omega\mathrm{B}\sP$-algebra structures.

$\infty$-morphisms are more flexible than strict morphisms of operadic algebras, and in addition, one can find an $\infty$-quasi-isomorphism of $\Omega\mathrm{B}\sP$ algebras if and only if there is a zig-zag of quasi-isomorphisms of strict $\Omega\mathrm{B}\sP$-algebras.

Moreover, as we will see in a moment, given a $\Omega\mathrm{B}\sP$-algebra $A$, its cohomology as a colored cochain complex $H^\bullet(A)$ has an $\Omega \mathrm{B}\sP$-algebra structure such that there is an $\infty$-quasi-isomorphism $H^\bullet(A)\rightsquigarrow A$.

More precisely, there is a homotopy transfer theorem; in Loday-Vallette, the arguments are made for a subcooperad $\sP^{\coshriek}$ of $B \sP$ which one may define in certain cases, but as noted in \autocite[Remark 2.7.2.(i)]{merkulov}, the same arguments apply also for $B\sP$.
\begin{theorem}[cf. Theorem 10.3.1 of \autocite{LodayVallette}]
Let $A'$ be a $\sP$-algebra (where the set of colors of $\sP$ is $S$) and consider a collection of deformation retractions
\[
\xymatrix{
A(s) \ar@<1ex>[r]^-{\iota(s)} & A'(s)\ar@<1ex>[l]^-{\pi(s)}{\ar@(ur,dr)[]+R+<0pt,4pt>;{[]+R+<0pt,-4pt>}^{\eta(s)}}
},
\]
one for each $s\in S$, such that 
\[
\pi(s)\circ \iota(s) = \id_{A(s)},\quad [d_{A'(s)},\eta(s)]=\id_{A'(s)}-\iota(s)\circ\pi(s).
\]
Then there is a $\Omega\mathrm B \sP$-algebra structure on $A'$ and an $\infty$-quasi-isomorphism $A\rightsquigarrow A'$ extending $\iota$.
\end{theorem}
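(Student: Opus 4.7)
The plan is to follow the classical operadic approach to homotopy transfer, adapted to the colored setting as indicated in Merkulov's reference. First, I would recast the problem in Maurer--Cartan terms: an $\Omega \mathrm{B}\sP$-algebra structure on a colored cochain complex $V$ is equivalent to a degree $1$ square-zero coderivation on the cofree $\mathrm{B}\sP$-coalgebra cogenerated by $V$, which is in turn the same thing as a Maurer--Cartan element in the convolution (pre-)Lie algebra $\Hom_{\mathbb{S}}(\mathrm{B}\sP,\End_V)$. The given $\sP$-algebra structure on $A'$, pulled back along the quasi-isomorphism $\Omega \mathrm{B}\sP \to \sP$, thus produces an MC element $\alpha' \in \Hom_{\mathbb{S}}(\mathrm{B}\sP,\End_{A'})$; the task is to produce an MC element $\alpha \in \Hom_{\mathbb{S}}(\mathrm{B}\sP,\End_{A})$ out of the data $(\iota,\pi,\eta)$, together with an $\infty$-morphism encoding the fact that $\alpha$ is transported from $\alpha'$.

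The concrete construction proceeds via decorated trees. Given a basis element of $\mathrm{B}\sP$ corresponding to a rooted tree $T$ with leaves colored by $s_1,\dots,s_k \in S$, root colored by $s \in S$, and each vertex decorated with a compatible operation in $\sP$, I would define the transferred operation $m_T \colon A(s_1)\otimes \cdots \otimes A(s_k) \to A(s)$ by placing $\iota(s_i)$ on the $i$-th leaf, the representation $\mu_{A'}$ on each vertex decorated by $\mu$, the chain homotopy $\eta$ on each internal edge, and $\pi(s)$ at the root. The sum of these decorated trees defines $\alpha$. Dually, the components of the prospective $\infty$-morphism $\iota_\infty\colon A\rightsquigarrow A'$ are defined by the same tree formula, but with $\id_{A'(s)}$ in place of $\pi(s)$ at the root. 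The side conditions $\eta^2 = \pi\eta = \eta\iota = 0$ are what make the combinatorics tractable: they kill exactly those recursions on sub-trees that would otherwise produce overcounting.

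The main obstacle, and the computational heart of the proof, is verifying that the tree sum $\alpha$ is actually an MC element in the convolution algebra, equivalently that the decorated tree operations satisfy the $\Omega \mathrm{B}\sP$-algebra relations (one such relation for each edge-contraction of a decorated tree). The key input is the homotopy identity $[d_{A'(s)},\eta(s)] = \id - \iota\pi$: applying it at the internal edges and using the $\sP$-algebra relations for $\alpha'$ on $A'$ produces cancelling boundary terms which precisely yield the $\partial^2 = 0$ equation for the induced coderivation on $\mathrm{B}\sP(A)$. The same identity, combined with the side conditions, verifies that $\iota_\infty$ intertwines the transferred and original bar differentials, so that it is a genuine $\infty$-morphism; that it is an $\infty$-quasi-isomorphism is then immediate, since its linear (one-vertex tree) component is precisely the quasi-isomorphism $\iota$. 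The extension from the single-colored argument of Loday--Vallette to the $S$-colored setting is formal once all Hom-spaces and endomorphism operads are interpreted with the color-matching constraints; the passage from $\sP^{\coshriek}$ to the full $\mathrm{B}\sP$ is exactly the generalization flagged in Merkulov's remark, since the tree-summation formula is insensitive to whether one restricts to the Koszul sub-cooperad.
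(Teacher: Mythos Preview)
Your approach is correct and matches the standard argument from Loday--Vallette which the paper cites; note that the paper does not give its own proof of this theorem but simply states it with a reference, and then describes the resulting transferred operations via the decorated-tree formula you reproduce (leaves labelled by $\iota$, vertices by the $\sP$-operations on $A'$, internal edges by $\eta$, root by $\pi$).

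There is one discrepancy worth flagging: for the $\infty$-quasi-isomorphism $\iota_\infty\colon A\rightsquigarrow A'$, the paper (and Loday--Vallette) places the homotopy $\eta$ at the root, not $\id_{A'}$ as you write. With $\id$ at the root the one-vertex-tree component would be $\mu_{A'}\circ(\iota\otimes\cdots\otimes\iota)$, and the resulting collection does not intertwine the bar differentials; it is the root $\eta$ that, via $[d_{A'},\eta]=\id-\iota\pi$, produces the $\pi$-rooted term needed to match the transferred structure on $A$ and the $\iota$-term needed to close the $\infty$-morphism identity. Once you correct this, your sketch is complete.
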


The theorem applies even if $A'$ is only a $\Omega\mathrm B \sP$-algebra, but we will not need that fact here.
However, it will be necessary below to understand the structure induced on $A$ by the above theorem in the case that $A'$ is a $\sP$-algebra.
Since the operad $\Omega \mathrm B \sP$ is semi-free on $\mathrm B \sP[-1]$, the transferred structure on $A'$ is determined by a collection of operations, one for each cooperation in $\mathrm B \sP$.
The cooperad $\mathrm B \sP$ is itself semi-cofree on $\overline{\sP}[1]$.
Cooperations in $\mathrm B \sP$ correspond to nonplanar rooted trees labelled by elements of $\overline{\sP}$.
Given such a tree $T$, one obtains a multilinear operation on $A$ in the following way.
Associate to the ``leaves'' of $T$ the map $\iota$, to a $\mu$-labeled vertex of $T$ its representation $\mu_{A'}$, and to the root of $T$ the operation $\pi$.
In this way, one obtains a multilinear operation on $A$ which describes the representation of $T$ in $A$.
Figure \ref{fig: exampleHTT} gives an example of such a tree.
The infinity-quasi-isomorphism extending $\iota$ is described similarly, except the root of a tree is labelled instead by $\eta$.

\begin{figure}
    \centering
\begin{tikzpicture}[scale=1.5]
         \node at (-.25,0){$\mu_{A'}$};
         \draw[ultra thick] (0, 0) arc (0:60:.25cm) --+(0,0) node(a){};
         \draw[ultra thick] (a) arc (60:120:.25cm) --+(0,0) node(b){};
        \draw[ultra thick] (b) arc (120:180:.25cm) --+(0,0) node(c){};
        \draw[ultra thick] (c) arc (180:300:.25cm) --+(0,0) node(d){};
        \draw[ultra thick] (d) arc (300:360:.25cm) --+(0,0) node{};
        \draw (a.center) -- +(60:.5) node[above]{$\iota$};
        \draw (b.center) -- +(120:.5) node[above]{$\iota$};
        \draw (d.center) -- +(-60:.5) node[pos=.65,left]{$\eta$} node (g){};
        \draw[ultra thick] (g.center) arc (120:270:.25cm)--+(0,0) node(h){};
        \draw[ultra thick] (h.center) arc (270:360:.25cm) --+(0,0) node(i){};
        \draw[ultra thick] (i.center) arc (0:60: .25cm) --+(0,0) node(j){};
        \draw[ultra thick] (j.center) arc (60:90: .25cm)--+(0,0) node(k){};
        \draw[ultra thick] (k.center) arc (90:120: .25cm)--+(0,0) node(l){};
        \path (k.center) -- +(0,-.25) node {$\nu_{A'}$};
        \draw (h.center) --+(-90: .5) node[below]{$\pi$};
        \draw (j.center) --+(60: .5) node[above]{$\iota$};
        \draw (k.center) --+(90:.5) node[above]{$\iota$};
         \end{tikzpicture}
         \label{fig: eep}
\caption{A diagrammatic example of transferred operations.}
\label{fig: exampleHTT}
\end{figure}
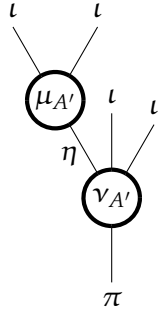

\subsection{$\PO$-Prefactorization Algebras}
\label{subsec: PODisj}
In this subsection, we describe the colored operad to which we apply the general theory of the preceding subsection.
This is the colored operad governing $\PO$-prefactorization algebras on a manifold $N$.

\begin{definition}
Let $N$ be a manifold. Let $\Disj_N$ be the colored operad whose colors are the open subsets of $N$ and whose operations are given by the formula
\[
\Disj_N(U_1,\ldots, U_k; V) = \left\{
\begin{array}{lr}
     \mathbb K & \text{ if the } U_i \text{ are pairwise disjoint subsets of }V \\
     0& \text{otherwise} 
\end{array}
\right.
\]
A \textbf{prefactorization algebra on $N$} is an algebra (in the category of chain complexes) over this colored operad.
\end{definition}

In more detail, a prefactorization algebra $\cF$ assigns a cochain complex $\cF(U)$ to each open subset $U\subseteq N$, and a cochain map
\[
m_{U_1,\ldots, U_k}^V : \cF(U_1)\otimes \cdots \otimes \cF(U_k) \to \cF(V)
\]
whenever the $U_i$ are pairwise disjoint subsets of $V$.
Furthermore, given any permutation $\sigma \in \mathbb{S}_k$, we require the following diagram to commute:
\[
\xymatrix{
\cF(U_1)\otimes \cdots \otimes \cF(U_k) \ar[r]^-{m_{U_1,\ldots, U_k}^V}\ar[d]^{\sigma^{-1}} & \cF(V)\\
\cF(U_{\sigma(1)})\otimes \cdots \otimes \cF(U_{\sigma(k)})\ar[ur]_-{m_{U_{\sigma(1)},\ldots, U_{\sigma(k)}}^V}&
}
\]
Finally, if $U_{i1},\ldots, U_{ik_i}$ are pairwise disjoint subsets of $V_i$ for $1\leq i\leq n$ and the $V_i$ are pairwise disjoint subsets of $W$, we require the following diagram to commute:
\[
\xymatrix{
\cF(V_1)\otimes \cdots \cF(V_n)\ar[r]& \cF(W)\\
\cF(U_{11})\otimes \cdots \otimes \cF(U_{nk_n})\ar[u]\ar[ur]&
}
\]

What makes a $\PO$-prefactorization algebra, as opposed to a ``regular'' prefactorization algebra? The operad $\PO$ is Hopf, so it makes sense to define the (symmetric monoidal) tensor product of $\PO$-algebras.
Recall also that, given a colored operad $\sP$, it makes sense to ask for a representation of/algebra over $\sP$ in any $\mathbb k$-linear symmetric monoidal category.

\begin{definition}
A \textbf{$\PO$-prefactorization algebra} is a representation of $\Disj_N$ in the symmetric monoidal category of $\PO$-algebras.
\end{definition}

The preceding definition is a compact description of $\PO$-prefactorization algebras, but it will be necessary for us to also present a colored operad whose algebras in the symmetric monoidal category of \emph{vector spaces} are $\PO$-prefactorization algebras:

\begin{definition}
\label{def: PODisj}
Let $N$ be a manifold. The $\mathrm{Opens}_N$-colored operad $\PO\text{-}\Disj_N$ is generated by
\begin{enumerate}
    \item the $k$-ary operations $m_{U_1,\ldots, U_k}^V$, of color $\binom{V}{U_1,\ldots, U_k}$, defined for every pairwise disjoint collection $\{U_i\}$ of open subsets of $V$.
    \item the binary operations $\mu_U$, of color $\binom{U}{U,U}$, defined for each open subset $U$ of $N$.
    \item the binary operations $\varpi_U$, of color $\binom{U}{U,U}$, defined for each open subset $U$ of $N$.
\end{enumerate}
These operations are subject to the relations:
\begin{enumerate}
    \item The $m_{U_1,\ldots, U_k}^V$ satisfy the relations of $\Disj_N$.
    \item For each $U$, the operations $\varpi_U$ and $\mu_U$ satisfy the relations that, respectively, the Poisson bracket and commutative product do in $\PO$.
    \item The maps $m_{U_1,\ldots, U_k}^V$ respect the $\PO$-structures.
\end{enumerate}
\end{definition}

It is striaghtforward to show that an algebra over $\PO\text{-}\Disj_N$ in the category of cochain complexes is the same as a $\PO$-prefactorization algebra.
By a \emph{morphism} or \emph{quasi-isomorphism} of ($\PO$-)prefactorization algebras, we shall mean a morphism of algebras over the colored operad $(\PO)\text{-}\Disj_N$.

The following lemma is immediate:
\begin{lemma}
\label{lem: PODisjgens}
The operad $\Disj_N$ is generated by the operations $m_{U,V}^{U\sqcup V}$, $m_U^V$.

The operad $\PO\text{-}\Disj_N$ is generated by the operations $m_{U,V}^{U\sqcup V}$, $m_U^V$, $\mu_U$, and $\varpi_U$.
\end{lemma}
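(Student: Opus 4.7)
The plan is to check directly that every generating operation listed in Definition \ref{def: PODisj} can be obtained from the shorter list by iterated operadic composition; the remaining relations need not be revisited, since they are inherited automatically.

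First I would handle $\Disj_N$. The only operations are the $m_{U_1,\dots,U_k}^V$ for pairwise disjoint $U_1,\dots,U_k\subseteq V$. The key move is to factor each such operation through the open set $W=U_1\sqcup\cdots\sqcup U_k$, which is open in $N$ because it is a union of opens. Using the associativity axiom of $\Disj_N$, one has
\[
m_{U_1,\dots,U_k}^V \;=\; m_W^V \circ m_{U_1,\dots,U_k}^W,
\]
so it suffices to generate the operations of the form $m_{U_1,\dots,U_k}^{U_1\sqcup\cdots\sqcup U_k}$. These are produced by induction on $k$: for $k=1$ the claim is trivial, for $k=2$ one has the generator $m_{U_1,U_2}^{U_1\sqcup U_2}$, and for $k>2$ one substitutes the $(k{-}1)$-ary operation $m_{U_1,\dots,U_{k-1}}^{U_1\sqcup\cdots\sqcup U_{k-1}}$ into the first slot of the binary operation $m_{U_1\sqcup\cdots\sqcup U_{k-1},\,U_k}^{U_1\sqcup\cdots\sqcup U_k}$. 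Both operations are among the allowed generators because the relevant unions are open. Associativity in $\Disj_N$ shows that this iterated composition equals $m_{U_1,\dots,U_k}^{U_1\sqcup\cdots\sqcup U_k}$, completing the induction.

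For $\PO\text{-}\Disj_N$, Definition \ref{def: PODisj} already presents the operad as generated by the three families $m_{U_1,\dots,U_k}^V$, $\mu_U$, $\varpi_U$. Applying the $\Disj_N$-argument above verbatim inside $\PO\text{-}\Disj_N$ reduces the first family to the binary $m_{U,V}^{U\sqcup V}$ and the unary $m_U^V$, giving the claimed generating set.

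The main obstacle, such as it is, is purely bookkeeping: one must be careful that every intermediate open set used in the iterated composition is a legal color of the colored operad, i.e.\ an open subset of $N$. This is automatic since finite unions of opens are open. No relations need to be verified as part of the claim, since the lemma asserts only generation; the relations built into $\PO\text{-}\Disj_N$ are exactly those needed to make the iterated compositions well-defined.
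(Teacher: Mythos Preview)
Your argument is correct and is exactly the natural one; the paper does not give a proof at all, merely declaring the lemma ``immediate'' after stating it. Your factorization $m_{U_1,\dots,U_k}^V = m_W^V \circ m_{U_1,\dots,U_k}^W$ with $W=\bigsqcup_i U_i$, followed by the obvious induction on $k$, is precisely what the author had in mind.
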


In particular, to describe a $\Disj_N$ algebra, one only needs to describe the unary operations and a subset of the binary operations.

The operad $\PO\text{-}\Disj_N$ is augmented, so one obtains the resolution
\[
\hoPODisj:= \Omega\mathrm{B}\left(\PO\text{-}\Disj_N\right)\to \PO\text{-}\Disj_N.
\]
From these two operads, one may obtain two relative categories: the category of $\PO$-prefactorization algebras on $N$ whose weak equivalences are quasi-isomorphisms, and the category of $\hoPODisj$-algebras whose morphisms and weak equivalences are $\infty$-morphisms and $\infty$-quasi-isomorphisms, respectively.
The general theory of operads assures that the $\infty$-categories obtained from these relative categories are equivalent.
Hence Theorem \ref{thm: main} states that there is an equivalence of objects in the $\infty$-category of $\PO$-prefactorization algebras on $N$.

\begin{definition}
Let $\cF$ be a ($\PO$-)prefactorization algebra on $N$ and $f: N\to M$ a continuous map of topological spaces. Then, the \textbf{pushforward} of $\cF$ via $f$ is the ($\PO$-)prefactorization algebra which assigns, to each open subset $U\subset M$, the space
\[
\rho_*(\cF)(U) := \cF(f^{-1}(U));
\]
this definition is completely analogous to the pushforward operation for (co)sheaves.
\end{definition}

\begin{remark}
There is a codescent condition one may formulate for ($\PO$-)prefactorization algebras \autocite{CG1}.
($\PO$-)prefactorization algebras satisfying this additional condition are known as ($\PO$-)factorization algebras.
Factorization algebras form a full subcategory of prefactorization algebras,
and the two main $\PO$-prefactorization algebras appearing in Theorem \ref{thm: main} are in fact $\PO$-factorization algebras.
Theorem \ref{thm: main} implies that there is a zigzag of quasi-isomorphisms of $\PO$-prefactorization algebras between $\Obcl_\cT$ and $\Obcl_{\cZ(\cT),\cT}$.
Since both of these objects are factorization algebras, and since the intermediate $\PO$-prefactorization algebras are related by quasi-isomorphism to each other, it follows that all of the intermediate $\PO$-prefactorization algebras are indeed $\PO$-factorization algebras.
Since factorization algebras form a full subcategory of prefactorization algebras, it follows that there is a zigzag of quasi-isomorphisms of $\PO$-factorization algebras between $\Obcl_{\cT}$ and $\Obcl_{\cZ(\cT),\cT}$.
\end{remark}

\section{The $\PO$-Factorization Algebras for Poisson BV Theories and their Universal Bulk-Boundary Systems}
\label{sec: PBV}
In this section, we summarize definitions, constructions, and examples related to Poisson BV theories and bulk-boundary systems.
These definitions have appeared in \autocite{butsonyoo} and \autocite{classicalarxiv}.
Hence, we will not give full definitions here; mostly, we will describe the data appearing in the definitions, omitting the conditions those data satisfy.
Recall from the introduction that, given a Poisson BV theory $\cT$, we obtain $\PO$-factorization algebras $\Obcl_\cT$ and $\Obcl_{\cZ(\cT),\cT}$ on $N$ and $N\times \RRge$, respectively.
Hence, we need to define Poisson BV theories and the associated $\PO$-prefactorization algebras $\Obcl_\cT$, and $\Obcl_{\cZ(\cT),\cT}$.
We start with the definition of a Poisson BV theory.

\begin{definition}[Cf. Definition 2.30 of \autocite{butsonyoo}]
Let $N$ be a manifold (without boundary). A \textbf{Poisson BV theory} on $N$ consists of 
\begin{enumerate}
    \item A $\ZZ$-graded real or complex vector bundle $L\to N$.
    \item For each $k\geq 1$, a $k$-ary polydifferential operator
    \[
     \ell_{k,\sL}: \sL^k \to \sL
    \]
    of cohomological degree $+1$.
    \item For each $r\geq 0$, a polydifferential operator
    \[
    \Pi^{(r)}: \sL^r \times \sL^! \to \sL
    \]
    of cohomological degree $+1$.
\end{enumerate}
These data are required to satisfy a number of compatibilities and symmetry properties, which have geometric interpretations. In particular, the operations $\ell_{k,\sL}$ turn $\sL[-1]$ into a presheaf of $L_\infty$-algebras on $N$ (so one may think of $\sL$ as a formal moduli problem or $\sL[1]$ as a $Q$-manifold), the $\Pi^{(r)}$ describe the Taylor components of the natural map $T^*\sL \to T\sL$ induced by a Poisson bivector, and the Lie derivative of this Poisson bivector along the cohomological vector field is zero.
We will often use the notation $\cT$ as shorthand for the entire collection $(L, \ell_{k,\sL}, \Pi^{(r)})$ appearing in this definition.
\end{definition}

Let us mention a few examples here, just to give the reader a flavor for the possibilities.
We refer the reader to \autocite{butsonyoo} for more examples.
\begin{example}
Let $V$ be a vector space and $\omega = \sum_{r\geq 0} \omega^{(r)}$ be a formal Poisson bivector on $V$, i.e. $\omega^{(r)}$ is a linear map $\Sym^r(V)\otimes V^* \to V$, and the sum $\omega$ satisfies $[\omega,\omega]=0$, where $[\cdot,\cdot]$ is the Schouten-Nijenhuis bracket of polyvectors.
Set $N= \RR$, $L = (\Lambda^\sharp T^* \RR)\otimes V$, define the operations $\ell_{k,\sL}$ to be trivial for $k>1$, and for $k=1$, set
\[
\ell_{1,\sL} =d_{dR}: \Omega^\sharp_{\RR} \otimes V\to \Omega^\sharp_\RR \otimes V.
\]
In this case, $\sL^! = \Omega^\sharp_\RR[1]\otimes V^*$, and we define---for $\alpha_1,\ldots, \alpha_{r+1} \in \Omega^\sharp_\RR$, $v_1,\ldots, v_{r}\in V$, and $\nu\in V^*$---the Taylor components $\Pi^{(r)}$ by the formula
\[
\Pi^{(r)}( \alpha_1\otimes v_1,\ldots, \alpha_r\otimes v_r, \alpha_{r+1}\otimes \nu) = \left(\alpha_1\wedge \alpha_2\wedge \cdots \wedge \alpha_r\right) \otimes \omega^{(r)}(v_1,\ldots, v_r, \nu).
\]
The cohomological degree of $\Pi=\sum_r \Pi^{(r)}$ is +1 on account of the shift in the equation $\sL^! = \Omega^\sharp_\RR[1]\otimes V^*$.
\end{example}

\begin{example}
\label{ex: cswzw}
Let $\Sigma$ be a Riemann surface and $\fg$ a Lie algebra with an invariant pairing $\kappa$.
Set $L= \Lambda^\sharp (T^*_\CC)^{(1,0)}\Sigma \otimes \fg^*$ (concentrated in cohomological degrees 0 and 1). 
Just as in the previous example, set $\ell_{k,\sL}= 0$ for $k>1$; set $\ell_1$ to be the Dolbeault differential on the space of $(1,\sharp)$ forms.
In this case, $\sL^! = \Omega^{0,\sharp}_\Sigma \otimes \fg[1]$, and we define
\[
\Pi^{(0)}(\alpha\otimes x) = \del\alpha \otimes \kappa(x,\cdot)
\]
For $\beta\in \Omega^{1,\sharp}_\Sigma$, $\alpha\in \Omega^{0,\sharp}_\Sigma$, $x\in \fg$, $\chi \in \fg^*$, we also define
\[
\Pi^{(1)}(\beta\otimes \chi, \alpha \otimes x) = -\beta\wedge \alpha \otimes [x,\chi],
\]
where the bracket $[x,\chi]$ denotes the coadjoint action of $x$ on $\chi$.
Define all other $\Pi^{(r)}$ to be zero.
For $\fg$ abelian, this is the central example of \autocite{GRW}.
\end{example}

\begin{example}
\label{ex: scalarft}
Let $(N,g)$ be a Riemannian manifold, and set $L = \underline{\RR}\oplus \underline{\RR}[-1]$.
Define 
\[
\ell_{1,\sL} : \cinfty_N\to \cinfty_N
\]
to be the Laplace-Beltrami operator; set all other $\ell_{k,\sL}$ to 0.
In this case, $\sL^! = \Denssheaf_N[1]\oplus \Denssheaf$. The Riemannian volume form on $N$ gives an isomorphism $\Denssheaf\cong \cinfty_N$, and we set $\Pi^{(0)}$ to be the natural extension of this isomorphism.
The reader may note that this is a symplectic BV theory; this example serves to demonstrate that symplectic BV theories are also Poisson BV theories.
In particular, they are the Poisson BV theories where $\Pi^{(0)}$ is induced from a bundle isomorphism and $\Pi^{(r)}=0$ for $r>0$.
\end{example}

Given a Poisson BV theory $\cT$ on $N$, we would like to define the $\PO$-prefactorization algebra $\Obcl_{\cT}$ on $N$.
Heuristically, it should assign to each open subset $U\subseteq N$ the algebra of functions on the formal space $\sL(U)$, together with the Chevalley-Eilenberg differential induced from $L_\infty$-algebra structure on $\sL(U)[-1]$.
To describe the Poisson bracket, however, we need to address some functional analytic subtleties first.

\begin{deflem}[cf. Theorem 6.6.1 of \autocite{CG1}]
Suppose that $\cT$ is a Poisson BV theory on $N$. Define, for any open $U\subseteq N$,
\[
C^\bullet_\cT(U) = \left( \prod_{k\geq 0} \Hom_\CVS(\sL(U)^{\hotimes_\beta k},\RR)_{\bbS_k},d_{CE}\right),
\]
where $d_{CE}$ is the Chevalley-Eilenberg differential arising from the $L_\infty$-brackets on $\sL(U)[-1]$.
For $U_1,\ldots, U_k$ pairwise disjoint, define the composites
\[
m_{U_1,\ldots, U_k}^V: C^\bullet_\cT(U_1)\hotimes_\beta \cdots \hotimes_\beta C^\bullet_\cT(U_k) \to C^\bullet_\cT(V)\hotimes_\beta \cdots \hotimes_\beta C^\bullet_\cT(V) \to C^\bullet_\cT(V),
\]
where the first map is a tensor product of extension-by-zero maps and the second is multiplication in the dg commutative algebra $C^\bullet_\cT(V)$.
These composites respect the dg commutative algebra structures on source and target, and define a factorization algebra structure on $C^\bullet_\cT$, which we call the \textbf{factorization algebra of fully distributional classical observables}.
\end{deflem}

The fully distributional classical observables are easier to define and more intuitive than the object $\Obcl_{\cT}$ which appears in Theorem \ref{thm: main}, but the na\"ive thing one might do to define the $\PO$-bracket of fully distributional classical observables does not work because of functional-analytic issues. 
To this end, we have to introduce the notion of functionals with smooth derivative:
\begin{definition}
\label{def: smoothB}
A \textbf{functional with smooth derivative} with support on $U$ and order $k$ is an element
\[
\varphi \in \Hom_\CVS(\sL(U)^{\hotimes_\beta k}, \RR)_{\bbS_k}
\]
all of whose $\bbS_k$ representatives, when understood as maps (obtained from currying in the last tensor factor)
\[
\sL(U)^{\hotimes_\beta (k-1)} \to \Hom_\CVS(\sL(U),\RR),
\]
factor through the inclusion
\[
\sL^!_c(U)\to \Hom_\CVS(\sL(U),\RR)
\]
of smooth, compactly-supported distributions into the space of all compactly-supported distributions.
We denote by $B_{(k)}(U)$ the space of functionals with smooth derivative with support on $U$ and order $k$.
\end{definition}

Alternatively, choosing a representative in $\Hom_\CVS(\sL(U)^{\hotimes_\beta k},\RR)$ for $\varphi$, we obtain a collection of $k$ maps
\[
\delta_i\varphi \in \Hom_\CVS(\sL(U)^{\hotimes_\beta(k-1)}, \Hom_\CVS(\sL(U),\RR)).
\]
Although each $\delta_i\varphi$ depends on the choice of representative, the set $\{\delta_1\varphi,\ldots, \delta_k \varphi\}$, considered as a collection of elements of
\[
\Hom_\CVS(\sL(U)^{\hotimes_\beta(k-1)}, \Hom_\CVS(\sL(U),\RR))_{\bbS_{k-1}},
\]
is independent of this choice.
Then, $\varphi$ has smooth first derivative if and only if each $\delta_i\varphi$ lifts to a map
\[
\Hom_\CVS(\sL(U)^{\hotimes_\beta(k-1)}, \sL^!_c(U)).
\]
Moreover, we obtain a well-defined element (independent of the choice of representative for $\varphi$)
\[
\delta\varphi = \sum_i \delta_i\varphi \in \Hom_\CVS(\sL(U)^{\hotimes_\beta(k-1)}, \sL^!_c(U))_{\bbS_{k-1}}.
\]
The symbol should be understood as the de Rham differential applied on the space of functionals on $\sL$.

\begin{deflem}[cf. Section 5.4 of \autocite{CG2} and Theorem 2.34 of \autocite{butsonyoo}]
\label{deflem: BPOstruct}
Define
\[
\Obcl_{\cT}(U):= \prod_{k\geq 0} B_{(k)}(U);
\]
then $\Obcl_{\cT}$ is a subfactorization algebra of $C^\bullet_\cT$.
Moreover, the dg commutative algebra structure on $\Obcl_{\cT}(U)$ can be extended to a $\PO$-algebra structure such that $\Obcl_\cT$ forms a $\PO$-prefactorization algebra. If the complex $(\sL,\ell_{1,\sL})$ is elliptic, then the inclusion $\Obcl_\cT\to C^\bullet_\cT$ is a quasi-isomorphism.
\end{deflem}
\begin{proof}
As noted in \autocite{butsonyoo}, the proof is given by adapting the proof given in Section 5.4 of \autocite{CG2}.
We only describe the $\PO$-algebra structure on $\Obcl_{\cT}(U)$, since it is important to understand this structure to execute the proof of Theorem \ref{thm: main}.
Suppose that $\varphi \in B_{(k)}(U)$ and $\psi \in B_{(k')}(U)$.
Then,
\[
(\varpi_U)_{\Obcl_\cT}(\varphi, \psi) = \sum_{r\geq 0} (\varpi_U)_{\Obcl_\cT}^{(r)}(\varphi, \psi),
\]
where $(\varpi_U)_{\Obcl_\cT}^{(r)}(\varphi, \psi)$ is an element of $B_{(k+k'+r-2)}(U)$ defined by the following equation, where we implicitly choose symmetric-group representatives of $\delta \varphi, \delta\psi$, and $\Pi^{(r)}$ in order to evaluate them on elements of $\sL(U)$ (verifying in the process that the resulting $\bbS_{k+k'+r-2}$-orbit does not depend on these choices):
\begin{align}
&(\varpi_U)_{\Obcl_\cT}^{(r)}(\varphi, \psi)(s_1,\ldots, s_{k+k'+r-2})\nonumber\\
= \frac{1}{2}&\langle \delta \varphi(s_1,\ldots, s_{k-1}), \Pi^{(r)}(s_k, \ldots, s_{k+r-1}, \delta\psi(s_{k+r},\ldots, s_{k+k'+r-2}))\rangle_\del\nonumber\\
\label{eq: bdyPObracket}
+\frac{(-1)^{|\varphi||\psi|}}{2}&\langle \delta \psi(s_1,\ldots, s_{k'-1}), \Pi^{(r)}(s_{k'}, \ldots, s_{k'+r-1}, \delta\varphi(s_{k'+r},\ldots, s_{k+k'+r-2}))\rangle_\del
\end{align}
where $\ip_\del$ denotes the natural pairing between $\sL(U)$ and $\sL^!_c(U)$.
One may verify, using integration by parts and the fact that $\Pi^{(r)}$ is a polydifferential operator, that $\{\varphi,\psi\}^{(r)}$ has smooth first derivative again.
\end{proof}

\begin{remark}
Figure \ref{fig: PB} provides a graphical depiction of a typical term appearing in the Poisson bracket of two functionals $\varphi$ and $\psi$.
It is a depiction of the second term on the right-hand side of Equation \eqref{eq: bdyPObracket}.
The diagram is to be read from top to bottom, with the edges on top of a vertex to be understood as inputs, and the ones at the bottom of a vertex to be understood as outputs.
The arrows on the edges indicate field type.
An input arrow pointing in to its vertex is an element of $\sL(U)$; an input arrow pointing out of its vertex is an element of $\sL^!(U)$.
We take the opposite convention for output arrows.
The ``elbow'' connecting the $D\varphi$ and $\Pi^{(r)}$ vertices is to be interpreted as the natural pairing between $\sL^!_c$ and $\sL$.
As drawn, it seems that there is an assymmetry between the way the $\varphi$ and $\psi$ vertices appear in the Poisson bracket.
This is mostly just a relic of the way we chose to write down the bracket above.
There is an implicit ``modular'' nature to the $\varphi$ and $\psi$ vertices which allows us to exchange input and output arrows.
Using this modularity, we could redraw the diagram from Figure \ref{fig: PB} in a way to suggest that $\varpi(\varphi, \psi)$ is a double composite.

Alternatively, one could describe the Poisson bracket of functionals in terms of the Schouten-Nijenhuis bracket $[\cdot,\cdot]$, which Butson and Yoo define on the complex of polyvector fields with smooth first derivative. In that language, one may define
\[
(\varpi_U)_{\Obcl_\cT}(\varphi,\psi) := \pm[[\Pi,\psi],\varphi]\pm [[\Pi,\varphi],\psi];
\]
this bracket satisfies the Jacobi identity as a consequence of the equation $[\Pi,\Pi]=0$.
\end{remark}

\begin{figure}[h]
\centering
\begin{tikzpicture}[scale=1.5]
         \node at (-.25,0)(k){$\delta\varphi$};
         \draw[ultra thick] (0, 0) arc (0:30:.25cm) --+(0,0) node(a){};
         \draw[ultra thick] (a) arc (30:60:.25cm) --+(0,0) node(b){};
        \draw[ultra thick] (b) arc (60:120:.25cm) --+(0,0) node(c){};
        \draw[ultra thick] (c) arc (120:150:.25cm) --+(0,0) node(d){};
        \draw[ultra thick] (d) arc (150:180:.25cm) --+(0,0) node(e){};
        \draw[ultra thick] (e) arc (180:270: .25cm) --+(0,0) node(f){};
        \draw[ultra thick] (f) arc (270:360: .25cm) --+(0,0) node (g){};
        \draw[fermionbar] (a.center) -- +(30:.5) node (h){};
        \draw[fermionbar] (d.center) -- +(150:.5) node (i){};
        \draw[fermionbar] (b.center) -- +(60:.5) node (h){};
        \draw[fermionbar] (c.center) -- +(120:.5) node (i){};
        \path (k.center) -- +(0,.5) node{$\cdots$};
        
        \path (k.center) --+(0:1.5cm) node (k1){$\Pi^{(r)}$};
        \path (k1.center) --+(.25,0) node (z1){};
        \draw[ultra thick] (z1) arc (0:60:.25cm) --+(0,0) node (a1){};
        \draw[ultra thick] (a1) arc (60:80:.25cm) --+(0,0) node(b1){};
        \draw[ultra thick] (b1) arc (80:120:.25cm) --+(0,0) node(c1){};
        \draw[ultra thick] (c1) arc (120:150:.25cm) --+(0,0) node(d1){};
        \draw[ultra thick] (d1) arc (150:180:.25cm) --+(0,0) node(e1){};
        \draw[ultra thick] (e1) arc (180:270: .25cm) --+(0,0) node(f1){};
        \draw[ultra thick] (f1) arc (270:360: .25cm) --+(0,0) node (g1){};
        \draw[fermion] (a1.center) -- +(60:.5) node (h1){};
        \draw[fermionbar] (d1.center) -- +(150:.5) node (i1){};
        \draw[fermionbar] (b1.center) -- +(80:.5) node (h1){};
        \draw[fermionbar] (c1.center) -- +(120:.5) node (i1){};
        \path (k1.center) -- +(-0.10,.60) node{$\cdots$};
        \draw[fermionbar] (f.center) to[out = 270, in =270] (f1.center) node (j){};
        
        \path (k1.center) --+(60:1cm) node (k2){$\delta\psi$};
        \path (k2.center) --+(.25,0) node (z2){};
        \draw[ultra thick] (z2) arc (0:30:.25cm) --+(0,0) node (a2){};
        \draw[ultra thick] (a2) arc (30:60:.25cm) --+(0,0) node(b2){};
        \draw[ultra thick] (b2) arc (60:120:.25cm) --+(0,0) node(c2){};
        \draw[ultra thick] (c2) arc (120:150:.25cm) --+(0,0) node(d2){};
        \draw[ultra thick] (d2) arc (150:180:.25cm) --+(0,0) node(e2){};
        \draw[ultra thick] (e2) arc (180:240: .25cm) --+(0,0) node(f2){};
        \draw[ultra thick] (f2) arc (240:360: .25cm) --+(0,0) node (g2){};
        \draw[fermionbar] (a2.center) -- +(30:.5) node (h2){};
        \draw[fermionbar] (d2.center) -- +(150:.5) node (i2){};
        \draw[fermionbar] (b2.center) -- +(60:.5) node (h2){};
        \draw[fermionbar] (c2.center) -- +(120:.5) node (i2){};
        \path (k2.center) -- +(0,.5) node{$\cdots$};
         \end{tikzpicture}
         \caption{A graphical depiction of a typical term appearing in the Poisson bracket of two elements $\varphi,\psi\in \Obcl_{\cT}$.}
        \label{fig: PB}
\end{figure}
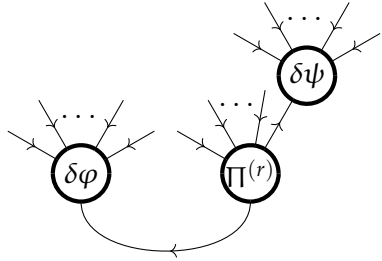

Let us now construct the $\PO$-prefactorization algebra $\Obcl_{\cZ(\cT),\cT}$ on $N\times \RRge$ which forms the ``bulk'' part of the bulk-boundary correspondence.

For the rest of this section (but not in the next section), given $U\subseteq N\times \RRge$ open, set
\[
\TA(U) = \cinfty(U; L\boxtimes \Lambda^\sharp T^* \RRge)\oplus \cinfty_D(U; L^!\boxtimes \Lambda^\sharp T^* \RRge).
\]
Here, the subscript $D$ in the second summand denotes that we consider only the $L^!$-valued forms which vanish when pulled back to the boundary $U\cap N\times \{0\}$.
The space $\TA(U)[-1]$ is an $L_\infty$-algebra with brackets determined as follows.
There are brackets $\ell_{k, B\to A}$ obtained from the original brackets on $\sL$ via tensoring with the commutative dg algebra $\Omega^\bullet(\RRge)$.
For $k=1$, this includes also the de Rham differential on $\Omega^\bullet(\RRge)$.
Such brackets also include the coadjoint $L_\infty$-brackets for the action of $\sL[-1]$ on $\sL^!$.
There are also brackets $\ell_{r,\Pi}$ which are induced from the Poisson structure.
We refer the reader to Definition 3.18 of \autocite{butsonyoo} for more details.

Note that there is a natural cohomological degree --1 pairing between $\TA(U)$ and $\TA_c(U)$; this pairing is invariant with respect to all brackets on $\TA(U)[-1]$, so that in particular there is an injective map of complexes $\TA_c(U)[1]\to \Hom_\CVS(\TA(U),\RR)$.
Moreover, this pairing induces a pairing of cohomological degree +1 on $\TA_c(U)[1]$.

The reader familiar with the BV formalism/bulk-boundary systems might notice that we are describing the structure of a BV theory on $\TA$.
More precisely, we are describing the space of fields of the universal bulk-boundary system for $\cT$.
The ``boundary'' part of the term ``bulk-boundary system'' appears here as the Dirichlet condition on $\sL^!$-valued forms.

Before we give the definition of the $\PO$-factorization algebra $\Obcl_{\cZ(\cT),\cT}$, let us apply these constructions to Example \ref{ex: cswzw}. Later in this section, we will revisit Example \ref{ex: scalarft}.

\begin{example}
Let us apply these constructions to the Poisson BV theory $\cT$ of Example \ref{ex: cswzw}. In this case, setting for simplicity of notation $U=U'\times I$, where $U'\subseteq \Sigma$ and $I\subseteq \RRge$ are open, we find
\[
\TA(U) = \Omega^{(0,\sharp)}_\Sigma(U')\hotimes_\beta \Omega^\sharp_D(I)\otimes \fg[1] \oplus \Omega^{(1,\sharp)}_\Sigma(U') \hotimes_\beta \Omega^\sharp(I)\otimes \fg^*.
\]
Let us suppose further that $\kappa$ is non-degenerate.
In this case, we may use $\kappa$ to identify $\fg^*$ with $\fg$. 
Then, the underlying graded space of $\TA(U)$ is identified with the space of $\fg$-valued forms in $U$ whose pullback to the boundary lies in the $(1,\sharp)$ forms on $\Sigma$.
We recognize this space as the space of fields of Chern-Simons theory with Wess-Zumino-Witten boundary condition (cf. Section 4.3 of \autocite{butsonyoo}).
Moreover, tracing through the constructions, the differential and 2-bracket on $\TA(U)[-1]$ induced from the Poisson structure of Example \ref{ex: cswzw} are precisely the de Rham differential and the usual bracket of $\fg$-valued forms. Finally, it is straightforward to verify that the pairing on~$\TA_c(U)[1]$ (when this space is understood as the space of $\fg$-valued forms on $U$ with a boundary condition) is precisely the pairing defining Chern-Simons theory as in \autocite{AKSZ} or \autocite{costrenormBV}.

On the other hand, consider the factorization algebra $\cF^{2\pi i \kappa}$ appearing in Section 5.5.1 of \autocite{CG1}.
This is a factorization algebra on $\Sigma$/
Its underlying graded object is (a mollified version) of the underlying graded object of $\Obcl_\cT$.
Moreover, $\cF^{2\pi i \kappa}(U)$ has a graded commutative product and a differential, and the failure of the differential to be a derivation for this product is precisely the $\PO$-bracket on~$\Obcl_\cT$.
Hence, we should think of the factorization algebra $\cF^{2\pi i \kappa}$ as a quantization of the $\PO$-prefactorization algebra~$\Obcl_\cT$.
Moreover, this quantization reproduces the Kac-Moody \emph{vertex} algebra at level $\kappa$ (Theorem 5.5.1 of \autocite{CG1}).

So, as applied to Example \ref{ex: cswzw}, Theorem \ref{thm: main} implies a correspondence between 1) classical Chern-Simons theory with Wess-Zumino-Witten boundary condition on $\Sigma\times \RRge$ and 2) the classical limit of the Kac-Moody vertex algebra on $\Sigma$.
It is an interesting question to determine whether this correspondence survives quantization, since in this case there is a candidate for the quantum observables of $\cT$.
\end{example}

Returning now to the general case, we must introduce---just as for $\Obcl_\cT$---functionals on $\TA$ with smooth first derivative.

\begin{notation}
The convenient vector space $A_{(k)}(U)$ is the subspace of 
\[
\Hom_{\CVS}(\TA(U)^{\hotimes k},\RR)_{\mathbb{S}_k}
\]
consisting of those functionals with \emph{smooth first derivative}, i.e. they consist of those functionals such that all of their representatives, when viewed as maps
\[
\Hom_\CVS(\TA(U)^{\hotimes_\beta (k-1)},\Hom_{\CVS}(\TA(U),\RR))
\]
factor through the inclusion $\TA_c(U)[1]\to \Hom_\CVS(\TA(U),\RR)$.
As above, if $\varphi$ has smooth first derivative, we may form the well-defined element 
\[
\delta\varphi \in \Hom_\CVS(\TA(U)^{\hotimes_\beta(k-1)},\TA_c(U)[1])_{\bbS_{k-1}},
\]
i.e. $\delta\varphi$ can be interpreted as the de Rham differential applied to $\varphi$.
\end{notation}

\begin{definition}[cf. Theorem 3.6 of \autocite{classicalarxiv}]
Let $\cT$ be a Poisson BV theory on $N$. The factorization algebra of \textbf{classical observables of the universal bulk-boundary system for $\cT$} is the $\PO$-factorization algebra defined by 
\[
\Obcl_{\cZ(\cT),\cT}(U) = \prod_{k\geq 0} A_{(k)}(U).
\]
The differential on $\Obcl_{\cZ(\cT),\cT}(U)$ is the Chevalley-Eilenberg differential for $\TA(U)[-1]$, and the commutative product is induced from the product in the symmetric algebra.
The structure maps are defined as for $\Obcl_{\cT}$, via extension-by-zero maps and multiplication in the commutative algebra $\Obcl_{\cZ(\cT),\cT}$.
The Poisson bracket on $\Obcl_{\cZ(\cT),\cT}(U)$ is defined as follows; given $\varphi\in A_{(k)}$ and~$\psi\in A_{(k')}$,
\[
(\varpi_U)_{\Obcl_{\cZ(\cT),\cT}}(\varphi, \psi) (s_1,\ldots, s_{k+k'-2}):=\langle \delta\varphi(s_1,\ldots, s_{k-1}), \delta\psi(s_{k},\ldots, s_{k+k'-2})\rangle,
\]
where $\langle \cdot, \cdot\rangle$ is the natural cohomological degree +1 pairing on $\TA_c(U)[1]$.
\end{definition}

\begin{example}
Consider the Poisson BV theory $\cT$ of Example \ref{ex: scalarft}. Again, set $U= U'\times I$ with $U'\subseteq N$ and $I\subseteq \RRge$ open.
Then,
\[
\xymatrix{
&&\cinfty_N(U')\hotimes_\beta \Omega^\sharp(I){\ar@(ur,ul)[]+U+<4pt,0pt>;{[]+U+<-4pt,0pt>}_{d_{dR}}}\ar[r]^(.45){\Delta_g}& \cinfty_N(U')\hotimes_\beta\Omega^\sharp(I){\ar@(ur,ul)[]+U+<4pt,0pt>;{[]+U+<-4pt,0pt>}_{d_{dR}}}[-1]\\
\TA(U)=&\Denssheaf_N(U')\hotimes_\beta \Omega^\sharp_D(I)[1]{\ar@(dr,dl)[]+D+<4pt,0pt>;{[]+D+<-4pt,0pt>}^{d_{dR}}}\ar[r]^(.53){\Delta_g}\ar@{>->}[ur]&\Denssheaf_N(U')\hotimes_\beta \Omega^\sharp_D(I){\ar@(dr,dl)[]+D+<4pt,0pt>;{[]+D+<-4pt,0pt>}^{d_{dR}}}\ar@{>->}[ur]&
};
\]
There are no higher brackets on $\TA(U)$.
If $0\notin I$, then the Dirichlet condition on the bottom row is vacuous, and the diagonal arrows are isomorphisms. It follows that $\TA(U)$ is acyclic, since it is a mapping cone of a cochain isomorphism. Hence, $\Obcl_{\cZ(\cT),\cT}(U)\simeq \RR$ as cochain complexes. In particular, the inclusion of the constant functionals into $\Obcl_{\cZ(\cT),\cT}(U)$ exhibits this equivalence; it is easy to check that this inclusion preserves the relevant $\PO$-algebra structures and the factorization products.
Hence, for Example \ref{ex: scalarft}, $\left.\Obcl_{\cZ(\cT),\cT}\right|_{N\times \RRgt}\simeq \RR$ as $\PO$-factorization algebras.

The argument we used in this example applies more generally to any Poisson BV theory $\cT$ where $\Pi^{(0)}$ is a zeroth-order differential operator induced from a cohomological degree +1 bundle isomorphism $L^!\to L$.
In particular, it applies to symplectic BV theories.
In other words, we see that for any such Poisson BV theory, the classical ``purely bulk'' observables are trivial.
The universal bulk theory $\cZ(\cT)$ therefore measures the extent to which the Poisson structure of $\cT$ fails to be symplectic.

On the other hand, Theorem \ref{thm: main} shows that the \emph{boundary} observables $\Obcl_{\cZ(\cT),\cT}$, where $\cT$ is a symplectic BV theory, reproduce the classical observables of $\Obcl_{\cT}$, as described in \autocite{CG2}.

\end{example}

\section{The Proof of the Main Result}
\label{sec: proof}
In this section, we prove Theorem \ref{thm: main}. We break the proof into parts for ease of digestion. 
First, we construct deformation retractions of $\rho_*\Obcl_{\cZ(\cT),\cT}(U)$ onto $\Obcl_\cT(U)$ for every open subset $U\subseteq N$. This is a two-step process.
Second, we transfer the $(\PO\text{-}\Disj_N)$-algebra structure on $\rho_*\Obcl_{\cZ(\cT),\cT}$ to a $\hoPODisj$-algebra structure on $\Obcl_\cT$, and verify that this $\hoPODisj$-algebra structure happens to be a $(\PO\text{-}\Disj_N)$-algebra structure.
Finally, we identify the transferred $(\PO\text{-}\Disj_N)$-algebra structure on $\Obcl_\cT$ with the one described in Definition \ref{deflem: BPOstruct}.
Before explaining these steps in detail, however, let us establish some notation.
We let $A= \rho_*\Obcl_{\cZ(\cT),\cT}$ and $B = \Obcl_{\cT}$, and we set
\[
\TA : = \sL\hotimes_\beta \Omega^\sharp(\RRge) \oplus \sL^!\hotimes_\beta \Omega^\sharp_D(\RRge)
\]
and 
\[
\TB : = \sL.
\]
These are both presheaves of $L_\infty$-algebras on $N$.
Note also that these definitions constitute a bit of a departure of notation from the previous section. 
The object we call $\TA$ here is technically the pushforward via $\rho$ of the sheaf we called $\TA$ in the previous section.

In $\TA$, we call elements of the first summand ``base-type fields'' and elements of the second summand ``fiber-type fields.''
The terminology arises from the fact that $\TA$ arises from a ``twisted cotangent bundle'' construction. 

We introduce (bi)-gradings on $A(U)$ and $B(U)$ which will be useful accounting tools.
The grading on $B$ is simply the overall polynomial degree: we will let $B_{(k)}$ denote the subspace of $B$ consisting of elements of polynomial degree $k$.
This notation coincides with the notation introduced in Definition \ref{def: smoothB}.
On $A$, we can refine the polynomial-degree grading by considering separately the degree of dependence on base-type and fiber-type fields.
Hence, we will let $A_{(k,\ell)}$ denote the space of functionals which accept $k$ base-type field inputs and $\ell$ fiber-type field inputs.

The differential on $B$ is a sum of terms $d_{(r)}$ ($r\geq 0$) which send~$B_{(k)}\to B_{(k+r)}$. The term $d_{(r)}$ is induced from the $(r+1)$-ary bracket on $\sL[-1]$.
We let $B^\tau$ denote the truncated complex whose underlying graded vector space is the same as that of $B$, but whose differential is just $d_{(0)}$.
This corresponds to considering the underlying abelian $L_\infty$-algebra of $\sL[-1]$.

A similar accounting happens for $A$: the differential consists of two terms $d^1$ and $d^2$, where $d^1$ is induced from the brackets $\ell_{k,B\to A}$ and $d^2$ from the brackets $\ell_{k,\Pi}$. 
The differential $d^2$ increases $\ell$-degree by exactly one, and reduces $k$-degree by at most 1. 
The differential $d^1$ preserves $\ell$-degree and is non-negative with respect to the $k$-degree.
We write $d^1_{(r)}$ for the term in $d^1$ which maps $A_{(k,\ell)}$ to $A_{(k+r,\ell)}$.
We write $d^2_{(r)}$ for the term in $d^2$ which maps $A_{(k,\ell)}$ to $A_{(k-1+r,\ell+1)}$.
(We have chosen the conventions so that $r$ must be non-negative in either case.)
As for $B$, we may consider the truncation $A^\tau$, where we consider only the term $d^1_{(0)}$ appearing in the differential. 
This corresponds to considering the underlying abelian $L_\infty$-algebra on $\sL[-1]$ with zero Poisson bivector.

Having introduced the notation, we can outline the proof of Theorem \ref{thm: main}.
First, in Lemma \ref{lem: firstdr}, we will construct a strong deformation retract $(\iota^\tau(U),\pi^\tau(U),\eta^\tau(U))$ of $A^\tau(U)$ onto $B^\tau(U)$ for each $U\subseteq N$.
Then, in Lemma \ref{lem: DR2}, we will ``turn on'' the remaining terms of the differential on $A$, and use the homological perturbation lemma to perturb this retract to a retract $(\iota(U),\pi(U),\eta(U))$ of $A(U)$ onto $B(U)$.
Next, in Lemma \ref{lem: transferedstructures}, we will verify that the homotopy transfer of the $(\PO\text{-}\Disj_N)$-algebra structure on $A$ along the retract of Lemma \ref{lem: DR2} induces a strict $(\PO\text{-}\Disj_N)$-algebra structure on $B$ (as opposed to a $\hoPODisj$-algebra structure).
Finally, in Lemma \ref{lem: finallem}, we verify that this $(\PO\text{-}\Disj_N)$-algebra structure on $B$ coincides with the one defined in Section \ref{subsec: PODisj}.

\begin{lemma}
\label{lem: firstdr}
For each open set $U\subseteq N$, there is a strong deformation retract 
\[
\xymatrix{
B^\tau(U) \ar@<1ex>[r]^-{\iota^\tau(U)} & A^\tau(U)\ar@<1ex>[l]^-{\pi^\tau(U)}{\ar@(ur,dr)[]+R+<0pt,4pt>;{[]+R+<0pt,-4pt>}^{\eta^\tau(U)}}
}.
\]
Moreover, these data satisfy
\begin{enumerate}
    \item The image of $\iota^\tau(U)$ is contained in~$\prod_{k\geq 0}A_{(k,0)}(U)$, and is a map of commutative algebras.
    \item The map $\pi^\tau(U)$ is zero on~$A_{(k,\ell)}(U)$ if~$\ell>0$, and is a map of commutative algebras.
    \item The map $\eta^\tau(U)$ preserves each~$A_{(k,\ell)}(U)$.
\end{enumerate}
\end{lemma}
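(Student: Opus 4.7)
The plan is to construct the strong deformation retract first at the level of the underlying fields and then lift to functionals by dualizing and applying the standard tensor trick to symmetric powers. All three auxiliary conclusions (1)--(3) should emerge cleanly once the base-type/fiber-type decomposition of $\TA(U)$ is reflected in the field-level SDR, and the functional-analytic check that smoothness of the first derivative is preserved will be the one genuinely delicate point.

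For the field-level SDR: since we work with the truncated differentials $A^\tau$ and $B^\tau$, the only differential on $\TA(U) = \sL(U)\hotimes_\beta \Omega^\sharp(\RRge) \oplus \sL^!(U)\hotimes_\beta \Omega^\sharp_D(\RRge)$ to account for is the linear piece $\ell_{1,\sL} + d_{dR}$. The cochain complex $\Omega^\sharp(\RRge)$ admits a well-known SDR onto its cohomology $\mathbb K$: inclusion of constants, pullback to $t=0$, and the integration homotopy $h(f) = 0$ for functions, $h(g\, dt) = \int_0^t g(s)\,ds$; and the same $h$ contracts the acyclic complex $\Omega^\sharp_D(\RRge)$ (the Dirichlet condition is vacuous on $1$-forms but forces functions to vanish at $0$). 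Tensoring with $\id_{\sL(U)}$ and $\id_{\sL^!(U)}$ respectively and forming the direct sum produces a strong deformation retract $(\iota^\tau_\TA, \pi^\tau_\TA, \eta^\tau_\TA)$ of $\TA(U)$ onto $\TB(U) = \sL(U)$. By design, $\iota^\tau_\TA$ takes values in the base-type summand, $\pi^\tau_\TA$ vanishes on the fiber-type summand, and $\eta^\tau_\TA$ preserves each summand separately; the side conditions $\pi^\tau_\TA\iota^\tau_\TA = \id$, $(\eta^\tau_\TA)^2 = \pi^\tau_\TA\eta^\tau_\TA = \eta^\tau_\TA\iota^\tau_\TA = 0$ are immediate up to the usual Koszul signs in the tensor product.

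Now I dualize and apply the tensor trick to symmetric powers. On the polynomial degree $k$ piece, I set $\iota^\tau(U)(\varphi) = \varphi \circ \Sym^k(\pi^\tau_\TA)$ and $\pi^\tau(U)(\psi) = \psi \circ \Sym^k(\iota^\tau_\TA)$, and define $\eta^\tau(U)$ as the symmetrization of the classical tensor-trick sum, with one slot carrying the dual of $\eta^\tau_\TA$ and the remaining slots carrying the dual of $\iota^\tau_\TA \pi^\tau_\TA$. The standard tensor-trick computation establishes the SDR identities and all side conditions from their counterparts at the field level. Properties (1)--(3) then follow immediately: (1) $\iota^\tau(U)$ lands in $\prod_k A_{(k,0)}$ because $\pi^\tau_\TA$ produces only base-type fields, and being defined by precomposition with a linear map it is tautologically a morphism of commutative algebras; (2) $\pi^\tau(U)$ vanishes on $A_{(k,\ell)}$ with $\ell > 0$ because $\iota^\tau_\TA$ produces no fiber-type fields for any fiber-type slot of $\psi$ to absorb, and it is again visibly a commutative-algebra morphism; (3) each term of $\eta^\tau(U)$ preserves the bidegree $(k,\ell)$ because $\eta^\tau_\TA$ acts diagonally on base-type and fiber-type fields.

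The main obstacle, and the only point requiring more than bookkeeping, is verifying that these maps preserve the smooth-first-derivative condition defining $A^\tau(U)$ and $B^\tau(U)$. For $\iota^\tau(U)$ and $\pi^\tau(U)$ this is automatic from precomposition with a continuous linear map between spaces of smooth sections. For $\eta^\tau(U)$ it reduces to the observation that the integration operator $h = \int_0^t$ is a smoothing operator with smooth Schwartz kernel on $\RRge$, so its adjoint sends smooth compactly-supported distributional sections of $\TA^!(U)$ back to smooth compactly-supported distributional sections. Since $\eta^\tau_\TA$ acts trivially in the $N$-direction, no new functional-analytic difficulty arises there, and the smooth-first-derivative condition is preserved slot by slot.
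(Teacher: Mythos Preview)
Your overall strategy---build a field-level SDR, dualize, apply the symmetric tensor trick, then check that smooth first derivative is preserved---is exactly the paper's. But there is a genuine gap in the crucial functional-analytic step, and it is precisely the one the paper warns about. Your field-level projection $\pi^\tau_\TA$ is evaluation at $t=0$. When you dualize to get $\iota^\tau(U)(\varphi)=\varphi\circ\Sym^k(\pi^\tau_\TA)$, consider a linear functional $\varphi$ represented by $\varphi'\in\sL^!_c(U)$. Then $\iota^\tau(U)(\varphi)$ acts on $s_1\otimes\beta\in\sL(U)\hotimes_\beta\Omega^0(\RRge)$ by $\beta(0)\,\langle\varphi',s_1\rangle_\partial$; as a linear functional on $\TA(U)$ this is represented by the distribution $\varphi'\otimes\delta_0$, which is \emph{not} a smooth compactly-supported section of the relevant bundle on $U\times\RRge$. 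Hence $\iota^\tau(U)(\varphi)\notin A_{(1,0)}(U)$, and your $\iota^\tau(U)$ does not land in $A^\tau(U)$ at all. Your assertion that preservation of smooth first derivative ``is automatic from precomposition with a continuous linear map'' is false: continuity of the resulting functional is automatic, but the smooth-first-derivative condition asks that the derivative be representable by a smooth section, which requires the transpose of the field-level map to carry smooth compactly-supported sections to smooth compactly-supported sections. Evaluation at a point fails this.

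The paper fixes exactly this by replacing $\mathrm{ev}_0^*$ with $\dpi(U)(s_1,s_2)=\int_{\RRge}\alpha\wedge s_1$ for a fixed compactly-supported one-form $\alpha$ of total integral $1$; then the transpose sends $\varphi'\mapsto\varphi'\otimes\alpha$, which is smooth and compactly supported (and satisfies the Dirichlet condition trivially since $\alpha$ is a one-form). This forces a corresponding modification of the base-type homotopy to $(1-\di\dpi)I$ rather than just $I$, and one then checks directly (as the paper does) that the transposes of $\di$, $\dpi$, and $\deta$ all preserve the smooth subspaces. Incidentally, your justification for $\eta^\tau(U)$ is also not quite right: the Schwartz kernel of $I=\int_0^t$ is the Heaviside function, which is not smooth, so the argument cannot proceed by citing a smooth kernel; one must compute the transpose explicitly.
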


\begin{proof}
Let us construct a strong deformation retract
\begin{equation}
\label{eq: tangentDR}
\xymatrix{
(\TB(U),\ell_{1,\sL}) \ar@<1ex>[r]^-{\di(U)}& (\TA(U),\ell_{1,B\to A})\ar@<1ex>[l]^-{\dpi(U)}{\ar@(ur,dr)[]+R+<0pt,4pt>;{[]+R+<0pt,-4pt>}^{\deta(U)}}
}.
\end{equation}
Given such a deformation retract, we can extend it to a strong deformation retract
\begin{equation}
\label{eq: symdr}
\xymatrix{
(\bigoplus_k\Hom_{\CVS}(\TB(U)^{\hotimes_\beta k},\RR)_{\mathbb{S}_k},\ell_{1,\sL}) \ar@<1ex>[r]^-{\iota_{\Sym}}& (\bigoplus_k \Hom_{\CVS}(\TA(U)^{\hotimes_\beta k},\RR)_{\mathbb{S}_k},\ell_{1,B\to A})\ar@<1ex>[l]^-{\pi_{\Sym}}{\ar@(ur,dr)[]+R+<0pt,4pt>;{[]+R+<0pt,-4pt>}^{\eta_{\Sym}}}
}
\end{equation}
using a standard trick (see, e.g. \autocite{othesis}).
Using this trick, $\iota_{\Sym}$ and $\pi_{\Sym}$ are algebra maps, from which the latter halves of statements 1) and 2) in the lemma follow.
This does not construct the desired deformation retract, since the algebras $A^\tau(U)$ and $B^\tau(U)$ are obtained respectively as subspaces of the complexes on the right-hand and left-hand sides of the above retraction.
So, we will have to check that the data $\iota_{\Sym},\pi_{\Sym}, \eta_{\Sym}$ descend to $A^\tau$ and $B^\tau$.

To this end, we remind the reader that $A^\tau$ and $B^\tau$ complexes of functionals with smooth first derivative.
The maps $\iota_{\Sym}, \pi_{\Sym}, $ and $\eta_{\Sym}$ are constructed by means of tensoring the transposes of the maps $\di(U), \dpi(U),$ and $\deta(U)$ with each other and with the identity.
Hence, if we show that these transpose maps preserve the subspaces of smooth linear functionals which appear in the definitions of $B^\tau$ and $A^\tau$, then the deformation retraction data $\iota_{\Sym}, \pi_{\Sym},\eta_{\Sym}$ will descend to corresponding data for $A^\tau$ and $B^\tau$.

So, to recap: we need to construct the retract in Equation \eqref{eq: tangentDR}, and show that its transpose preserves the subspace of smooth linear functionals.
To this end, choose a compactly-supported one-form $\alpha$ on $\RR_{\geq 0}$ such that $\int_{\RR_{\geq 0}}\alpha =1$.
Define
\begin{align*}
    \dpi(U) : \TA(U) &\to \TB(U)\\
    \dpi(U)(s_1,s_2)&= \int_{\RR_{\geq 0}} \alpha \wedge s_1;
\end{align*}
here, $s_1$ is a base-type field and $s_2$ is a fiber-type field.
To obtain a similar such quasi-isomorphism, one could have set $\dpi(U)(s_1,s_2)=\mathrm{ev}_0^*s_1$, where $\mathrm{ev}_0$ is the inclusion of the boundary point $\{0\}\subset\RRge$; however, this choice will not descend to the complex of functionals with smooth first derivative.
Note that $\dpi$ is zero on fiber-type fields, and requires $s_1$ to be a zero-form along $\RRge$.
Since $\iota_{\Sym}$ is constructed as the pullback via $\dpi$, the first of the assertions in the previous sentence implies statement (1) of the lemma.
Moreover, if one considers $s'\in \sL^!_c(U)$ as a linear functional on $\TB(U)$, then the transpose of $\dpi$ sends $s'$ to $s'\otimes \alpha$, which is a smooth linear functional on $\TA(U)$.
(Here, the check that $s'\otimes \alpha$ satisfies the boundary condition to lie in $\sL^!_c(U)\hotimes \Omega^\sharp_{D}(\RRge)$ is vacuous, since $\alpha$ is a one-form so pulls back to zero at the boundary of $\RRge$.)

Similarly, define
\begin{align*}
     \di(U) : \TB(U) &\to \TA(U)\\
   \di(U)(s)&= \mathrm{pr}_1^*s,
\end{align*}
where $\mathrm{pr}_1: U\times \RRge \to U$ is the projection onto the first factor. 
In other words, we extend $s$ to be a constant form in the normal direction.
The map $\pi_{\Sym}$ in Equation \eqref{eq: symdr} is induced from $\di(U)$ via transpose and symmetrization; since $\di$ has image in the base-type fields, $\pi_{\Sym}$ is zero on functionals which take in at least one fiber-type field.
This gives assertion (2) of the Lemma.
Given $s_1\in \sL_c\otimes \Omega^\sharp$ and $s_2\in \sL^!_c\otimes \Omega^\sharp_D$, considered as elements of the linear dual to $\TA(U)$, the transpose of $\di(U)$ sends $(s_1,s_2)$ to $\int_{\RRge} s_2$, which is also a smooth linear functional.
One verifies directly that $\dpi(U)\di(U)=\id$, and that these two maps respect the differentials.

Finally, we define $\deta(U)$, and show that it 1) preserves smooth linear functionals, 2) is a homotopy between $\di(U)\circ\dpi(U)$ and the identity, and 3) satisfies condition (3) of the lemma.
Consider the cohomological degree --1 endomorphism $I$ of $\Omega^\sharp(\RRge)$ which takes a one-form $\beta$ to the unique anti-derivative of $\beta$ which vanishes at the boundary $t=0$.
The map $I$ clearly preserves the space of forms which vanish when pulled back to the boundary.
Then, for $s_1$ a base-type field and $s_2$ a fiber-type field, we define
\[
\deta(U)(s_1,s_2) = ((1-\di\dpi)Is_1, Is_2).
\]
Here, we abuse notation slightly, and let $I$ refer to the tensor product of $I$ with the identity on $\sL(U)$ or $\sL^!(U)$, as appropriate.
Note that $[d_{dR},I] = \id-\mathrm{pr}_1^* \mathrm{ev}_0^*$,
where $\mathrm{ev}_0$ is the inclusion of the boundary point $\{\ast\}\hookrightarrow \RRge$.
From this it follows that $\deta(U)$ is a homotopy witnessing the exactness of $\di\dpi-\id$.
Moreover, it is straightforward to check using integration-by-parts on $\RRge$ that the transpose of $\deta$ preserves the smooth linear functionals.
Explicitly, if $s_1\in \sL_c(U)\hotimes_\beta \Omega^1_c(\RRge)$ and $s_2 \in \sL^!_c(U)\hotimes_\beta \Omega^1_{D,c}(\RRge)$, then
\begin{equation}
\label{eq: etaonmollified}
\left(\deta(U)\right)^T(s_1,s_2) = \left(I(s_1)-\int_{\RRge} s_1 ,-\left(\int_{\RRge} s_2 \right) \otimes I(\alpha) +I(s_2)\right).
\end{equation}

Note that $\deta(U)$ separately preserves the spaces of base-type and fiber-type fields.
And, the induced homotopy $\eta_{\Sym}$ is constructed by tensoring the identity with the transposes of $\deta$ and $\di\dpi$, all of which preserve both gradings on $A$. 
Hence, the induced deformation retract in Equation \eqref{eq: symdr} satisfies statement (3) of the Lemma.
Moreover, it is easy to verify directly that the deformation retraction $\di,\dpi,\deta$ is strong, and the ``strong-ness'' of a deformation retraction persists through the operations that produce $\iota^\tau, \pi^\tau, \eta^\tau$ from these.

This completes all verifications necessary for the Lemma.
\end{proof}

We may write the differential on $A(U)$ as $d^{1}_{(0)}+\delta_A$.
Since we have a strong deformation retraction as in Lemma \ref{lem: firstdr} which involves only $d^{1}_{(0)}$, we may ask whether we can ``perturb'' $\iota^\tau,\pi^\tau, \eta^\tau$, and $d_{(0)}$ into new data which give a strong deformation retraction of $A(U)$ onto $(B^\tau(U),d_{(0)}+\delta_B)$, for some $\delta_B$.
The homological perturbation lemma \autocite{crainic} gives a condition under which this is the case, and gives explicit formulas for a differential $d_{(0)}+\delta_B$ and a new deformation retraction $(\iota,\pi,\eta)$ of $A(U)$ onto $B(U)$ (with this new differential).
In the following lemma, we verify that this condition is satisfied, and we explore some elementary properties of the perturbed data.

\begin{lemma}
\label{lem: DR2}
For each open subset $U\subseteq N$, the deformation retraction $\iota^\tau(U),\pi^\tau(U),\eta^\tau(U)$ satisfies the hypotheses of the homological perturbation lemma \autocite{crainic}.
Application of the homological perturbation lemma induces a strong deformation retract
\[
\xymatrix{
B(U) \ar@<1ex>[r]^-{\iota(U)} & A(U)\ar@<1ex>[l]^-{\pi(U)}{\ar@(ur,dr)[]+R+<0pt,4pt>;{[]+R+<0pt,-4pt>}^{\eta(U)}}
};
\]
the maps $\pi(U)$ reamin unperturbed (i.e. $\pi(U)=\pi^\tau(U)$) and are therefore maps of commutative algebras.
\end{lemma}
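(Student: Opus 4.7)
The plan is to verify the hypotheses of the homological perturbation lemma (HPL) and then to observe that the standard HPL formula for the perturbed projection collapses to $\pi^\tau(U)$, because the operator $\pi^\tau(U)\,\delta_A\,\eta^\tau(U)$ vanishes identically on $A(U)$.

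For convergence, write $\delta_A = \sum_{r\geq 1} d^1_{(r)} + \sum_{r\geq 0} d^2_{(r)}$ for the perturbation of $d^1_{(0)}$. By Lemma~\ref{lem: firstdr}(3), $\eta^\tau(U)$ preserves the $(k,\ell)$-bigrading on $A(U) = \prod_{k,\ell} A_{(k,\ell)}(U)$; the operators $d^1_{(r)}$ with $r\geq 1$ strictly raise $k$, while each $d^2_{(r)}$ strictly raises $\ell$ by exactly one. For fixed input bidegree $(k,\ell)$ and target bidegree $(k'',\ell'')$, only finitely many terms in $\sum_{n\geq 0}(\delta_A\,\eta^\tau(U))^n$ can contribute: any such composite must contain exactly $\ell'' - \ell$ factors of $d^2$-type and at most $k'' - k + \ell'' - \ell$ factors of $d^1$-type. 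The series therefore converges termwise in the product, so HPL produces a strong deformation retract $(\iota(U),\pi(U),\eta(U))$ with $\pi(U) = \pi^\tau(U)\bigl(1 - \delta_A\,\eta^\tau(U)\bigr)^{-1}$.

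For the identification $\pi(U) = \pi^\tau(U)$, I would show $\pi^\tau(U)\,\delta_A\,\eta^\tau(U) = 0$ as an operator on $A(U)$, which forces all higher-order terms in the geometric series to vanish. The argument splits by bidegree. On $A_{(k,\ell)}(U)$ with $\ell > 0$, the image under $\delta_A\,\eta^\tau(U)$ has $\ell$-degree $\geq \ell > 0$ and is annihilated by $\pi^\tau(U)$ using Lemma~\ref{lem: firstdr}(2). On $A_{(k,0)}(U)$ the $d^2_{(r)}$-contributions land in $\ell$-degree one and are killed for the same reason, leaving only $\pi^\tau(U)\,d^1_{(r)}\,\eta^\tau(U) = 0$ on $A_{(k,0)}(U)$ for $r\geq 1$. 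The side condition $\pi^\tau(U)\eta^\tau(U) = 0$ puts $\eta^\tau(U)\varphi$ into $\ker\pi^\tau(U) \cap A_{(k,0)}(U)$, so it suffices to prove that $d^1_{(r)}$ preserves this subspace. A functional $\psi \in A_{(k,0)}(U)$ lies in $\ker\pi^\tau(U)$ iff $\psi$ vanishes on every $k$-tuple of constant extensions $\di s_i$. The brackets $\ell_{k, B \to A}$ on $\TA(U)[-1]$ are built from $\ell_{k,\sL}$ by tensoring with the wedge product on $\Omega^\sharp(\RRge)$, so on constant extensions
\[
\ell_{r+1, B\to A}(\di s_1, \ldots, \di s_{r+1}) = \di\bigl(\ell_{r+1,\sL}(s_1,\ldots,s_{r+1})\bigr).
\]
Expanding $d^1_{(r)}\psi$ in terms of $\ell_{r+1, B \to A}$ and evaluating on constant extensions then produces only expressions of the form $\psi(\di(\cdot),\ldots,\di(\cdot))$, each of which vanishes by hypothesis. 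This completes the identification $\pi(U) = \pi^\tau(U)$; that this map is a morphism of commutative algebras is then immediate from Lemma~\ref{lem: firstdr}(2).

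The main obstacle is this last verification, namely that $d^1_{(r)}$ preserves $\ker\pi^\tau(U) \cap A_{(k,0)}(U)$; the remaining steps are formal bookkeeping with the $(k,\ell)$-bigrading and direct application of the HPL formulas.
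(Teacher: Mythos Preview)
Your argument for convergence and for $\pi(U)=\pi^\tau(U)$ is essentially the paper's: both rely on the $(k,\ell)$-bigrading for smallness, and both rest on the fact that $\di$ is a strict map of $L_\infty$-algebras for the collapse of $\pi$. Your formulation via ``$d^1_{(r)}$ preserves $\ker\pi^\tau\cap A_{(\bullet,0)}$'' is equivalent to the paper's intertwining relation $\pi^\tau\bigl(\sum_{r\geq 1}d^1_{(r)}\bigr)=d_{CE}\,\pi^\tau$, and your displayed identity $\ell_{r+1,B\to A}(\di s_1,\ldots,\di s_{r+1})=\di\bigl(\ell_{r+1,\sL}(s_1,\ldots,s_{r+1})\bigr)$ is exactly what establishes it.

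There is, however, a genuine omission. The lemma asserts a deformation retract onto $B(U)$ \emph{with its Chevalley--Eilenberg differential}, but the homological perturbation lemma a priori only hands you a retract onto $(B^\sharp(U),d_{(0)}+\delta_B)$ for some unknown $\delta_B$. You never verify that $\delta_B$ agrees with the higher CE terms on $B$. The paper checks this explicitly from the formula $\delta_B=\pi^\tau\sum_{p\geq 0}(\delta_A\eta^\tau)^p\delta_A\iota^\tau$: the $d^2$-contributions die by $\ell$-degree, and then the intertwining relation together with $\pi^\tau\eta^\tau=0$ and $\pi^\tau\iota^\tau=\id$ collapse the sum to $d_{CE}$. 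Your own ingredients suffice for this---once you know $\pi^\tau\delta_A\eta^\tau=0$, the formula reduces to $\delta_B=\pi^\tau\delta_A\iota^\tau=\pi^\tau\bigl(\sum_{r\geq 1}d^1_{(r)}\bigr)\iota^\tau$, and your bracket identity plus $\dpi\di=\id$ finishes it---but the step needs to appear.

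A minor point on convergence: you bound $p$ after fixing both source and target bidegree, but an element of $A(U)=\prod_{k,\ell}A_{(k,\ell)}(U)$ has components in infinitely many bidegrees. You should note that your bound $p\leq (k''-k)+2(\ell''-\ell)$ is uniform in the source because $k,\ell\geq 0$, yielding $p\leq k''+2\ell''$; this is the bound the paper obtains directly.
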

\begin{proof}
We would like to apply the homological perturbation lemma to the deformation retraction constructed in Lemma \ref{lem: firstdr} and the perturbation $\delta_A:=\sum_{r\geq 1}d^1_{(r)}+\sum_{r\geq 0}d^2_{(r)}$.
To do this, we need to ensure that the perturbation is ``small'' in the sense that the infinite sum
\[
(1-\delta_A \eta^\tau)^{-1}= \sum_{p=0}^\infty (\delta_A\eta^\tau)^p
\]
converges.
To this end, the bigrading on $A^\tau$ and the properties (1)-(3) of Lemma \ref{lem: firstdr} are useful. We will show that, for a fixed $(k,\ell)$, only finitely many of the powers $(\delta_A\eta_{(0)})^p$ can give non-zero elements in $A_{(k,\ell)}$.
Explicitly, if $p> k+2\ell$, then the image of $(\delta_A\eta_{(0)})^p$ cannot have a nontrivial projection onto $A_{(k,\ell)}$. To see this, note that we can expand $\delta_A$ as a sum of two terms: one which strictly increases the sum of the two gradings (namely, $\sum_{r\geq 1}(d^1_{(r)}+d^2_{(r)})$), and one which preserves the sum, but increases the second grading by 1 (namely, $d^2_{(0)}$).
When we expand out $(\delta_A \eta^\tau)^p$ using this decomposition of $\delta_A$, the result will be a sum of terms with $s$ factors involving the first term in $\delta_A$ and $p-s$ factors involving the second term. 
The number $s$ can range from 0 to $p$.
Note that either $s> k+\ell$ or $p-s> \ell$.
Moreover, $\eta^\tau$ preserves $(k,\ell)$-bidegree, so if $s>k+\ell$, then the $s$ factors of $\left( \sum_{r\geq 1}d^1_{(r)}+d^2_{(r)}\right) \eta^\tau$ tell us that we need to ``come from'' $A_{(k',\ell')}$ with $k'+\ell'<0$ in order to ``hit'' $A_{(k,\ell)}$.
Similarly, if $p-s>\ell$, then the $p-s$ factors of $d^2_{(0)}\eta^\tau_{(0)}$ tell us that we need to ``come from'' $A_{(k+\ell', \ell-\ell')}$ for $\ell'>\ell$ in order to ``hit'' $A_{(k,\ell)}$. 
In either case, this is impossible.
This completes the proof that the homological perturbation lemma applies. Hence, we get a deformation retraction 
\begin{equation}
\label{eq: defretract2}
\xymatrix{
(B^\sharp(U),d_{(0)}+\delta_B) \ar@<1ex>[r]^-{\iota(U)} & A(U)\ar@<1ex>[l]^-{\pi(U)}{\ar@(ur,dr)[]+R+<0pt,4pt>;{[]+R+<0pt,-4pt>}^{\eta(U)}}
},
\end{equation}
where $\delta_B$ is the perturbation of $d_{(0)}$ induced from $\delta_A$.

We now need to check that $\delta_B$ is precisely the part of the Chevalley-Eilenberg differential which increases $\Sym$ degree, i.e. that $(B^\sharp , d_{(0)}+\delta_B) = B$.
Then, we will have indeed constructed a deformation retract of $A(U)$ onto $B(U)$.
The perturbation $\delta_B$ is given by the formula
\[
\delta_B = \pi^\tau\sum_{p=0}^\infty (\delta_A\eta^\tau)^p \delta_A \iota^\tau.
\]
Here again, keeping track of bigradings is useful.
The perturbation $\delta_A$ consists of $d^2$ and $\sum_{r\geq 1} d^1_{(r)}$.
The differential $d^2$ strictly increases the second, $\ell$-degree, while $d^{1}_{(r)}$ preserves the $\ell$-degree.
Since $\pi^\tau$ is only non-zero on factors $A_{(k,\ell)}$ where $\ell=0$ and $\eta^\tau$ preserves the bidigree, it follows that the formula for $\delta_B$ reduces to one in which $\delta_A$ is replaced by the term $\sum_{r\geq 1} d^1_{(r)}$.
Moreover, it is relatively straightforward to check that 
\begin{equation}
\label{eq: commdiffs}
\pi^\tau \left(\sum_{r\geq 1} d^1_{(r)}\right) = d_{CE}\pi^\tau,
\end{equation}
where $d_{CE}$ is the term in the Chevalley-Eilenberg differential on $B$ arising from all brackets of arity at least 2.
This follows from the fact that $\di: \TB[-1]\to \TA[-1]$, which induces $\pi^\tau$ via transpose and symmetrization, manifestly preserves the brackets.
Using Equation \eqref{eq: commdiffs}, the side condition $\pi^\tau\eta^\tau=0$, and the retraction equation $\pi^\tau\iota^\tau=\id$, we conclude that $\delta_B$ is precisely $d_{CE}$.

It remains to check that the $\pi^\tau(U)$ are unperturbed.
To this end, note that $\pi(U) = \pi^\tau \sum_{p\geq 1} (\delta_A \eta^\tau)^p$; note that $\pi^\tau \delta_A = \pi^\tau \left( \sum_{r\geq 1}d^1_{(r)}\right)$, and Equation \eqref{eq: commdiffs} together with the side condition $\pi^\tau \eta^\tau=0$ gives that $\pi(U)= \pi^\tau(U)$.
This is unsuprising, since as we have noted, $\pi^\tau(U)$ is induced from $\di(U)$, and $\di(U)$ is already a map of $L_\infty$-algebras (using all of the brackets on $\TA$ and $\TB$).
It follows by Lemma \ref{lem: firstdr}, Statement 2, that $\pi(U)$ is a map of dg commutative algebras for each $U$.
\end{proof}

\begin{lemma}
\label{lem: transferedstructures}
The $\hoPODisj$-algebra structure obtained on $B$ from the $(\PO\text{-}\Disj_N)$-algebra structure on $A$ and the strong deformation retracts of Lemma \ref{lem: DR2} is a $(\PO\text{-}\Disj_N)$-algebra structure.
\end{lemma}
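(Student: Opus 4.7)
The transferred $\hoPODisj$-algebra structure on $B$ is defined tree-by-tree by the homotopy transfer recipe, and since the quotient $\hoPODisj \to \PO\text{-}\Disj_N$ sends every multi-vertex tree generator to zero, strictness amounts to showing that for every tree $T$ with at least two vertices the transferred tree operation on $B$ vanishes. My plan is to split the generators of $\PO\text{-}\Disj_N$ enumerated in Lemma~\ref{lem: PODisjgens} into two families: the \emph{commutative-compatible} generators $m^V_{U_1,\ldots,U_k}$ and $\mu_U$, and the Poisson brackets $\varpi_U$. By Lemma~\ref{lem: DR2}, $\pi$ is a morphism of dg commutative algebras; since $\iota^\tau$ is built from the projection $U\times\RRge \to U$ followed by extending sections as constant forms in the normal direction, its transpose $\pi$ also commutes with the extension-by-zero maps used to define the factorization structure maps, so $\pi$ is a strict morphism $A \to B$ for every commutative-compatible generator.

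For any tree $T$ with at least two vertices whose root vertex $v_0$ is commutative-compatible, the root must have at least one non-leaf child, so the HTT operation takes the form $\pi\circ (v_0)_A(\ldots, \eta\circ (\text{subtree op})_A, \ldots)$. Pulling $\pi$ inside $(v_0)_A$ via its strict-morphism property yields $(v_0)_B(\ldots, \pi\eta\circ (\text{subtree op})_A, \ldots)$, which vanishes by the side condition $\pi\eta = 0$. This disposes of every multi-vertex tree whose root is commutative-compatible, reducing the problem to trees whose root is a Poisson bracket.

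For Poisson-rooted trees, I would exploit the bigrading $A = \bigoplus_{k,\ell} A_{(k,\ell)}$ developed in Lemma~\ref{lem: firstdr}: the bracket $\varpi_A$ strictly decreases the fiber degree $\ell$ by one (since the pairing on $\TA_c[1]$ contracts exactly one base leg against one fiber leg), $\eta^\tau$ preserves the bigrading, and $\pi$ is nonzero only on fiber degree zero. The perturbed $\iota$ from Lemma~\ref{lem: DR2} supplies positive fiber degree only through applications of $d^2$ (each carrying an explicit factor of $\Pi^{(r)}$) occurring in its HPT series. The main obstacle is the following delicate combinatorial argument: for a tree with two or more $\varpi$-vertices, one must show that the only way to assemble a nonvanishing total composition---balancing the fiber degree supplied by the leaf $\iota$'s against the fiber degree consumed by the $\varpi_A$'s, all while respecting the $\eta^\tau$ insertions at the internal edges---is the single-vertex configuration. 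A promising route is to reinterpret an iterated composition $\varpi_A\circ(\eta^\tau\otimes\id)$ along an internal edge as an integral over the $\RRge$ factor whose integrand, by the explicit form of $\eta^\tau$ in Equation~\eqref{eq: etaonmollified} (involving the primitive operator $I$), is a total derivative in the normal direction; integration by parts, combined with the Dirichlet boundary condition carried by the fiber-type fields, then forces the contribution to vanish.
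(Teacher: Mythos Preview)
Your case-1 reduction is correct: since $\pi=\pi^\tau$ is a morphism of commutative algebras and intertwines the factorization structure maps (both facts follow from Lemma~\ref{lem: firstdr} and the construction of $\di$), and since $\pi\eta = \pi^\tau\eta^\tau\bigl(\sum_p(\delta_A\eta^\tau)^p\bigr)=0$ by the side condition $\pi^\tau\eta^\tau=0$, any tree whose root vertex is $\mu_U$ or some $m^V_{U_1,\ldots,U_k}$ and which has at least one non-leaf child contributes zero. The paper does not isolate this step; it runs a single uniform degree argument over all trees at once. So on this part you have a valid alternative.

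Case 2, however, has a genuine gap. The $\ell$-grading alone cannot rule out multi-vertex Poisson-rooted trees. Take the two-vertex tree with root $\varpi_U$, one internal edge, and child $\varpi_U$. The two brackets consume $\ell$-degree $2$; to land in $\ell$-degree $0$ one needs exactly two $d^2$-insertions somewhere in the $(\eta^\tau\delta_A)^p$-expansions carried by the edge and leaves, and nothing in your argument forbids this. What you have established is only the inequality $r\geq\sum_v D_v$ (where $r$ counts total $\delta_A$-insertions and $D_v$ is the $\ell$-drop at vertex $v$), with nothing bounding $r$ from above.

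What the paper supplies is a second, independent grading---the $D$-degree, tracking de~Rham form degree along $\RR_{\geq 0}$. The key fact is that $\eta^\tau$ has $D$-degree exactly $+1$ (because the antiderivative $I$ converts one-forms to zero-forms), while $\iota^\tau$, $\delta_A$, and all commutative-compatible operations have $D$-degree $0$, $\varpi_A$ has $D$-degree $-1$, and $\pi^\tau$ annihilates positive $D$-degree. This forces the \emph{equality} $|E(T)|+r=\sum_v D_v$, and combining it with the $\ell$-inequality yields $|E(T)|\leq 0$. Your integration-by-parts remark is pointing at precisely this form-degree bookkeeping, but it is not developed into a usable constraint: once $\delta_A$-insertions appear on the internal edge (as they must, to supply the $\ell$-budget), the integrand is no longer a total derivative in $t$, and the Dirichlet condition on fiber-type fields does not by itself force vanishing.
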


\begin{proof}

As discussed in Section \ref{sec: PODisj}, a $\hoPODisj$-algebra structure on $B$ is described by a number of generating operations, one operation $\mu_T$ for each tree $T$ whose vertices are labelled by operations from $\PO\text{-}\Disj_N$.
Because we are transferring a strict $(\PO\text{-}\Disj_N)$-algebra structure from $A$ to $B$, the description of the operations $\mu_T$ obtained via homotopy transfer is relatively simple: we place $\iota$ on the leaves of $T$, the appropriate $\PO\text{-}\Disj_N$ operations from $A$ on the vertices of $T$, $\eta$ on the internal edges of $T$, and $\pi$ on the root.
Reading the operations from the leaves to the root, we obtain a multilinear operation on $B$.

To prove the lemma, it is therefore necessary and sufficient to show that $\mu_T=0$ if $T$ has more than one vertex.
To this end, we use two gradings on $A$ and $B$.
The first grading is by $\ell$-degree. 
The second is a new grading, which we call the $D$-degree and which we describe in a moment.
First, however, we remark that for both of these gradings, the entirety of $B$ is homogeneous of degree 0.
To prove the lemma, we will show that $\mu_T$ has non-zero degree for both of these gradings unless $T$ has only one vertex.

Now, to describe the $D$-degree.
Recall that $\TA$ has a tensor factor of $\Omega^\sharp(\RRge)$ in its definition.
For $D\geq 0$, define $A_{(k,\ell,D)}$ to be the subspace of $A_{(k,\ell)}$ which requires among its inputs $k+\ell-D$ fields which are zero-forms along $\RRge$.
In other words, elements of $A_{(k,\ell,0)}$ accept only zero-forms along $\RRge$ as inputs.
We define a similar grading on $B$ by setting $D= 0$ on $B$.
Evidently, 
\[
A_{(k,\ell)} = \bigoplus_{D=0}^{k+\ell} A_{(k,\ell,D)}.
\]
Note also that $\pi=\pi^\tau$ is zero on $A_{(k,\ell, D)}$ for $D>0$ (and also for $\ell>0$, we have seen in Lemma \ref{lem: firstdr}).
Table \ref{table: Ddegrees} shows how various other operations behave with respect to $D$-degree.
\begin{table}[h]
    \centering
    \begin{tabular}{|c|c|}
    \hline
         Operator & $D$-degree  \\
         \hline
       $\eta^\tau$  & +1\\
       $\iota^\tau$ & 0 \\
       $\mu_A$ & 0\\
       $\varpi_A$ & -1\\
       $d^1_{(r)}$ & 0\\
       $d^2_{(r)}$ & 0\\
       $m_{U_1,\ldots, U_k}^V$ & 0\\
       \hline
    \end{tabular}
    \caption{Behavior of Operators on $A$ with respect to $D$-degree.}
    \label{table: Ddegrees}
\end{table}
We will also need the explicit formulas for $\eta(U)$, $\iota(U)$, and $\pi(U)$ obtained from the homological perturbation lemma:
\begin{align*}
    \iota(U) = \left(\sum_{p=0}^\infty (\eta^\tau(U) \delta_A)^p\right)\iota^\tau(U), \quad \pi(U) = \pi^\tau(U), \quad \eta(U) = \eta^\tau(U) \left( \sum_{p=0}^\infty (\delta_A\eta^\tau(U))^p\right),
\end{align*}
where the simplification for $\pi$ is obtained in Lemma \ref{lem: DR2}.
As noted above, the transferred operations on $B$ are represented by trees where each vertex is labelled by a $\PO\text{-}\Disj_N$ operation in $A$, the leaves are labelled by $\iota$, and the root is labelled by $\pi=\pi^\tau$.
We may further decompose the operations represented by such trees as follows. 
We use the explicit formulas for the perturbed data, and we label each leaf by $\iota^\tau$ and each edge by $\eta^\tau$. Each leaf and edge also acquires a non-negative integer label, which labels the power of $(\delta_A \eta^\tau)$ appearing at the leaf or edge.
The transferred operation $(\mu_T)_B$ is a sum over all labellings of $T$ by these integers.
Let $T$ be a tree labeled thus, and let $r$ be the sum of the integer labels on $T$.
Note also that the $D$-degree of a vertex $v$ of $T$ must be non-positive since it is a composite of $\mu_A$, $\varpi_A$, and $m_{U_1,\ldots, U_k}^V$ (cf. Definition \ref{def: PODisj}); let $-D_v$ be this $D$-degree.
Now that $T$ is labeled thus, we can compute its $D$-degree.
Each edge of $T$ raises $D$-degree by 1 plus the integer label at the edge. 
The leaves raise the $D$-degree by the corresponding integer labels. 
Each vertex $v$ changes the $D$-degree by $-D_v$.
Combining these observations, we find that $\mu_T$ is non-zero only when
\begin{equation}
\label{eq: Ddegreematching}
|E(T)|+r -\sum_{v\in V(T)}D_v=0.
\end{equation}
Let us now consider $\ell$-degree, using the same notation $r$ and $D_v$ as before. Just as for $D$-degree, the trees need to have $\ell$-degree 0 to be non-zero operations on $B$. 
Recall that $\eta^\tau$, $\pi^\tau$, $\iota^\tau$, and $\mu_A$ preserve $\ell$-degree; $\varpi_A$ lowers $\ell$-degree by 1, and $\delta_A$ can increase $\ell$-degree by at most 1.
From this it follows that each eadge and leaf raises $\ell$-degree at most by the corresponding integer lable, and each vertex lowers $\ell$-degree by $D_v$.
Let $|T|_\ell$ denote the $\ell$-degree of $T$; we find, for $\mu_T$ to be non-zero, we need
\[
0=|T|_\ell \leq r-\sum_{v\in V(T)}D_v.
\]
Plugging Equation \eqref{eq: Ddegreematching} into this inequality, we obtain
\[
0\leq -|E(T)|,
\]
from which we obtain that $T$ has no edges, so consists of exactly one vertex.
\end{proof}

By Lemma \ref{lem: transferedstructures}, we find that $B$ acquires a $(\PO\text{-}\Disj_N)$-algebra structure.
It remains to check that this structure coincides with the structure described in Definition \ref{deflem: BPOstruct}.

\begin{lemma}
\label{lem: finallem}
The $(\PO\text{-}\Disj_N)$-algebra structure induced on $B$ by homotopy transfer from $A$ coincides with that of Definition \ref{deflem: BPOstruct}.
\end{lemma}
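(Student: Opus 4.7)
The plan is to identify the transferred structure generator-by-generator. By Lemma \ref{lem: PODisjgens}, the colored operad $\PO\text{-}\Disj_N$ is generated by the unary extension maps $m_U^V$, the binary extensions $m_{U,V}^{U\sqcup V}$, the commutative products $\mu_U$, and the Poisson brackets $\varpi_U$. By Lemma \ref{lem: transferedstructures}, each generator $\nu$ transfers to the single-vertex-tree operation $\pi\circ\nu_A\circ\iota^{\otimes k}$ on $B$, so it suffices to unpack these four families. A preliminary observation used repeatedly is $\pi\circ\iota = \mathrm{id}_B$, which follows from $\pi=\pi^\tau$ (Lemma \ref{lem: DR2}), from the expansion $\iota - \iota^\tau = \sum_{p\geq 1}(\eta^\tau\delta_A)^p\iota^\tau$, and from the side condition $\pi^\tau\eta^\tau = 0$.

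For the products $\mu_U$, the identification is immediate: since $\pi^\tau$ is a map of commutative algebras (Lemma \ref{lem: firstdr}), $\pi^\tau\mu_A(\iota\varphi,\iota\psi) = \mu_B(\pi\iota\varphi,\pi\iota\psi) = \mu_B(\varphi,\psi)$. For the extension maps $m_U^V$ and $m_{U,V}^{U\sqcup V}$, one first checks that $\iota^\tau$ intertwines the extension-by-zero operations on $A$ and $B$; this holds because $\iota^\tau$ is obtained from $\dpi$, which acts only in the $\RRge$-direction and therefore commutes with restriction along $N$. Combined with the fact that the perturbation $\iota - \iota^\tau$ is killed by the subsequent $\pi^\tau$, this yields agreement with Definition \ref{deflem: BPOstruct}.

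The substantive content is the identification of the Poisson bracket $\varpi_U$. Using $\iota = \sum_{p\geq 0}(\eta^\tau\delta_A)^p\iota^\tau$, we compute $\pi^\tau\varpi_A(\iota\varphi,\iota\psi)$ and apply the $D$-degree and $\ell$-degree accounting developed in the proof of Lemma \ref{lem: transferedstructures} to collapse the sum drastically. Since $\pi^\tau$ vanishes outside $A_{(\cdot,0,0)}$, $\eta^\tau$ has $D$-degree $+1$ and preserves $\ell$-degree, $d^1_{(r)}$ preserves $\ell$-degree while $d^2_{(r)}$ raises it by $1$, and $\varpi_A$ lowers both $D$- and $\ell$-degree by $1$, one finds that the only surviving contributions use exactly one instance of $\eta^\tau\delta_A$ distributed between the two inputs, and that this single $\delta_A$ must be some $d^2_{(r)}$. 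Thus
\begin{equation}
\label{eq: finaltwoterms}
\pi^\tau\varpi_A(\iota\varphi,\iota\psi) = \sum_{r\geq 0}\Big[\pi^\tau\varpi_A\bigl(\eta^\tau d^2_{(r)}\iota^\tau\varphi,\iota^\tau\psi\bigr) + \pi^\tau\varpi_A\bigl(\iota^\tau\varphi,\eta^\tau d^2_{(r)}\iota^\tau\psi\bigr)\Big].
\end{equation}

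It remains to unpack \eqref{eq: finaltwoterms} and match it, term by term, with \eqref{eq: bdyPObracket}. The operator $d^2_{(r)}$ is the Chevalley-Eilenberg contribution of the $L_\infty$-bracket $\ell_{r+1,\Pi}$, hence is built from $\Pi^{(r)}$. The various $\RRge$-integrations arising from $\dpi(U) = \int\alpha\wedge(\cdot)$, from $\deta(U)$ via the antiderivative $I$ of \eqref{eq: etaonmollified}, and from the bracket pairing $\varpi_A$ combine via integration by parts and the normalization $\int\alpha = 1$ into the natural $\sL$--$\sL^!_c$ pairing on $N$ that plays the role of $\langle\cdot,\cdot\rangle_\partial$ in \eqref{eq: bdyPObracket}; the $\frac{1}{2}$ prefactors emerge from the $\bbS_2$-symmetrization of the pairing on $\TA_c[1]$. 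The main obstacle is this form-factor bookkeeping: one must carefully track how $\alpha$, $I$, and the normal integration interact against the polydifferential operators $\Pi^{(r)}$, and verify that all ``spurious'' contributions involving higher-degree forms along $\RRge$ vanish by virtue of the Dirichlet condition on the fiber-type fields and integration by parts.
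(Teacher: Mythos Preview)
Your approach coincides with the paper's: reduce to the four generating families via Lemma \ref{lem: PODisjgens}, dispatch $\mu_U$ and the extension maps using that $\pi^\tau$ is an algebra map and that $\dpi$ acts only in the $\RRge$-direction, and then use the $D$- and $\ell$-degree accounting to collapse the transferred bracket to the two-term expression \eqref{eq: finaltwoterms}. Up to this point your argument is correct and essentially identical to the paper's.

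The divergence is in the final paragraph, and it contains a genuine error. You attribute the $\tfrac{1}{2}$ in \eqref{eq: bdyPObracket} to ``$\bbS_2$-symmetrization of the pairing on $\TA_c[1]$''; this is not where it comes from. In the paper the $\tfrac{1}{2}$ arises from the explicit normal-direction integral
\[
\int_{\RRge}\bigl(1-I(\alpha)\bigr)\wedge\alpha \;=\; \int_{\RRge}\alpha \;-\; \int_{\RRge} I(\alpha)\,\d I(\alpha) \;=\; 1 - \tfrac{1}{2}\bigl[I(\alpha)^2\bigr]_0^\infty \;=\; \tfrac{1}{2},
\]
using that $I(\alpha)(0)=0$ and $I(\alpha)(\infty)=\int_{\RRge}\alpha=1$. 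This integral is what remains of the $\RRge$-form-factor after the interaction of $\iota^\tau$ (which inserts $\alpha$), $\deta$ (which contributes the $(1-I(\alpha))$-term via \eqref{eq: etaonmollified}), and the bulk pairing $\varpi_A$. No symmetrization weight produces this factor; the pairing on $\TA_c[1]$ enters without a $\tfrac{1}{2}$.

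Your mechanism for killing the spurious terms is also misattributed. The terms coming from $\delta_i(\eta^\tau d^2_{(r)}\iota^\tau\varphi)$ with $1\leq i\leq r$ vanish not because of the Dirichlet condition or integration by parts, but because the formal adjoints $\Pi^{(r),!_i}$ for $i\leq r$ take values in $\sL^!$-type fields, and these cannot pair nontrivially with $\delta\iota^\tau\psi$, which is likewise of $\sL^!$-type. Only the $(r+1)$-st slot, whose formal adjoint $\Pi^{(r),!_{r+1}}$ lands in $\sL$-type, survives the bulk pairing. Once you make these two corrections, your computation matches the paper's exactly.
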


\begin{proof}
The operad $\PO\text{-}\Disj_N$ is generated by the operations $\mu_U$, $\varpi_U$, $m_{U}^V$, and $m_{U_1,U_2}^{U_1\sqcup U_2}$ (Lemma \ref{lem: PODisjgens}).
It suffices to verify that the operations on $B$ corresponding to these generators match.

We start with $m_U^V$. 
Recall the notation $D_v$ as in the proof of the preceding lemma.
In this case, $D_v=0$, so by Equation \eqref{eq: Ddegreematching}, $r=0$.
Hence, homotopy transfer from $A$ induces the following map on $B$:
\[
\pi^\tau(V) \circ (m_U^V)_A\circ \iota^\tau(U).
\]
The maps $(m_U^V)_A$ are pullbacks along the restriction maps of $\TA$ (as a sheaf on $N$).
Similarly for $B$.
The maps $\pi^\tau$ and $\iota^\tau$ are induced from the maps $\di$ and $\dpi$, which manifestly respect these restriction maps. It follows that 
\[
\pi^\tau(V) \circ (m_U^V)_A\circ \iota^\tau(U)= (m_U^V)_B,
\]
as desired.

Next, we consider the operation $\mu_U$.
Again, $r=0$, so that the transferred operation on $B$ is
\[
\pi^\tau(U) \circ (\mu_U)_A\circ(\iota^\tau(U)\otimes \iota^\tau(U)).
\]
Because $\pi^\tau$ is an algebra map, and because $\pi^\tau\iota^\tau = \id_B$, it follows that this operation is $\mu_B$.

Third, let us consider the operations $m_{U_1,U_2}^{U_1\sqcup U_2}$.
Again, $r=0$; moreover, since it happens that
\[
(m_{U_1,U_2}^{U_1\sqcup U_2})_A = (\mu_{U_1\sqcup U_2})_A \circ (m_{U_1}^{U_1\sqcup U_2})_A\otimes (m_{U_2}^{U_1\sqcup U_2})_A,
\]
this third case reduces to the previous two.

Finally, we consider the operations $\varpi_U$.
In this case, we have $D_v=1$, so $r=1$.
The transferred operations on $B$ are 
\[
\pi(U) (\varpi_U)_A(\iota(U)\otimes \iota(U));
\]
we have already remarked that $\pi(U)=\pi^\tau(U)$, and moreover when we expand $\iota(U)$ in powers of $(\delta_A\eta^\tau)$, only one such power is allowed between the two factors of $\iota(U)$ appearing in the formula.
Moreover, only the term $d^2$ in the differential on $A$ may appear in order to compensate for the fact that $\varpi_U$ lowers $\ell$-degree by 1.
So, it remains to check that 
\begin{equation}
\label{eq: bracket}
\pi^\tau\varpi_A(\eta^\tau d^2\iota^\tau  \otimes \iota^\tau)+ \pi^\tau\varpi_A(\iota^\tau\otimes \eta^\tau d^2\iota^\tau)
\end{equation}
coincides precisely with the $\PO$-bracket $\varpi_B$ (we suppress all dependence on $U$ from the notation for ease in reading).
This part of the proof very much resembles the proof of the quantum theorem in Section 4 of \autocite{GRW}.
The main difference is that the accounting is more complicated for a number of reasons; in both cases, however, the proof boils down to integrals of bump functions on $\RRge$.

Let us start by considering linear observables $\varphi,\psi \in \sL^!_c(U)$.
Then, for $s_1,\ldots, s_r\in \sL(U)$, $s'\in \sL^!(U)$, $\beta_1,\ldots, \beta_r\in \Omega^\sharp(\RRge)$, and $\beta\in \Omega^\sharp_D(\RRge)$, recall that we have
\[
\varpi_A^{(r)} (\eta^\tau d^2_{(r)}\iota^\tau\varphi, \iota^\tau \psi) = \langle \delta \eta^\tau d^2_{(r)}\iota^\tau\varphi, \delta \iota^\tau \psi\rangle;
\]
we also have
\[
\left(d^2_{(r)}\iota^\tau \varphi\right)(s_1\otimes \beta_1,\ldots, s_r\otimes \beta_r, s'\otimes \beta) =\pm\langle \varphi, \Pi^{(r)}(s_1,\ldots, s_r,s)\rangle_\del \int_{\RRge} \alpha\wedge \beta_1\wedge \cdots \wedge \beta_r\wedge \beta;
\]
Next, $\eta^\tau d^2_{(r)}\iota^\tau \varphi$ is a sum of terms. In the sum, we allow ourselves to apply $\deta$ to exactly one input of $d^2_{(r)}\iota^\tau \varphi$, and on the other inputs, we apply either $\di\dpi$ or $\id$ and sum over all possibilities with appropriate combinatorial weights.
For our computation, we will see that we only need to keep some of these summands, so we avoid spelling this sum out in full detail at this stage.
Since $\Pi^{(r)}$ is a differential operator, for each $1\leq i\leq r$, there is a differential operator 
\begin{equation}
\label{eq: formaladj}
\Pi^{(r),!_i}: \sL(U)^{\times(i-1)} \times \sL^!(U)\times \sL(U)^{\times(r-i)} \times \sL^!(U)\to \sL^!(U),
\end{equation}
obtained as (up to a sign) the formal adjoint to $\Pi^{(r)}$ in the $i$-th slot, satisfying
\begin{equation}
\label{eq: lastformaladj}
\langle \varphi, \Pi^{(r)}(s_1,\ldots, s_r,s)\rangle_\del = \langle \Pi^{(r),!_i}(s_1,\ldots, s_{i-1},\varphi, s_{i+1}, \ldots, s_{r},s),s_i\rangle_\del.
\end{equation}
Similarly, there is an operator $\Pi^{(r),!_{r+1}}$ for the last slot. 
In other words, 
\begin{align*}
\delta_i \left( d^{2}_{(r)} \iota^\tau \varphi\right) &(s_1\otimes \beta_1,\ldots, s_{r-1}\otimes \beta_{r-1},s\otimes \beta)\\
&= \pm \Pi^{(r),!_i}(s_1,\ldots, s_{i-1},\varphi, s_{i},\ldots, s_{r-1},s) \otimes \alpha\wedge \beta_1\wedge \cdots \wedge \beta_{r-1}\wedge \beta
\end{align*}
for $1\leq i\leq r$, and 
\begin{align*}
\delta_{r+1}\left( d^{2}_{(r)} \iota^\tau \varphi\right) &(s_1\otimes \beta_1,\ldots, s_{r}\otimes \beta_r)\\
&= \pm\Pi^{(r),!_{r+1}}(s_1,\ldots, s_{r},\varphi) \alpha\wedge \beta_1\wedge \cdots \wedge \beta_{r}.
\end{align*}
Equations \eqref{eq: etaonmollified}, \eqref{eq: formaladj}, and \eqref{eq: lastformaladj} can be combined into a formula for $\delta_i \left( \eta^\tau d^2_{(r)}\iota^\tau \varphi\right)$.
This will be a rather lengthy formula, and since many of the terms in it drop out when we perform the desired calculation, we will not write it out explicitly.
Instead, we will make some observations that will lead to an explicit formula for \[\pi^\tau\varpi_A(\eta^\tau d^2_{(r)} \iota^\tau \varphi \otimes \iota^\tau \psi).\]
Note that since $\Pi^{(r),!_i}$ for $1\leq i\leq r$ has output in $\sL^!$, and $\eta^\tau$ will not change this, only the term $\delta_{r+1}\eta^\tau d^2_{(r)}\iota^\tau \varphi$ will be able to pair non-trivially with $\delta\iota^\tau \psi$.
Finally, when we apply $\pi^\tau$, we will set all $\beta_i$'s to 1, so that $\di\dpi=\id$ when applied to any input of this form.
When we combine all of these observations, we find:
\begin{align*}
\pi^\tau\varpi_A&(\eta^\tau d^2_{(r)} \iota^\tau \varphi \otimes \iota^\tau \psi)(s_1,\ldots, s_r)= \\
&=\pm \langle \Pi^{(r),!_{r+1}}(s_1,\ldots, s_r, \varphi),\psi\rangle_\del \int_{\RRge} \left(1-I(\alpha)\right)\alpha\\
&= \pm\frac{1}{2} \langle \varphi , \Pi^{(r)}(s_1,\ldots, s_r,\psi)\rangle_\del.
\end{align*}
This is one of the two terms in Equation \eqref{eq: bdyPObracket}.
The other term is obtained by considering the second term in Equation \eqref{eq: bracket}.
It can be verified that the overall signs appearing here match those in the definition of $\varpi_B$, Equation \eqref{eq: bdyPObracket}.
Finally, note that the argument is structurally identical, though more cumbersome in terms of notation, for $\varphi\in B_{(k)}$ and $\psi\in B_{(k')}$ with $k,k'>1$.
\end{proof}

\addtocontents{toc}{\vspace{.5em}} 

\label{Bibliography}
\printbibliography
\end{document}